 \patchcmd{\@setaddresses}{\scshape\ignorespaces}{\ignorespaces}{}{} 
\appto\maketitle{%
\let\@makefnmark\relax  \let\@thefnmark\relax
\ifx\@empty\addresses\else\@footnotetext{%
  \vskip-\bigskipamount\@setaddresses}
  }
\def\enddoc@text{}
\shorttitle}
\@nx\MakeUppercase{\the\toks@}}
\patchcmd\@settitle{\uppercasenonmath\@title}{\Large}{}{}
\authors}
\newtheorem{theorem}{Theorem}[section]
\newtheorem{corollary}{Corollary}[section]
\newtheorem{lemma}{Lemma}[section]
\numberwithin{equation}{section}
\begin{document}
\address{$^{[1]}$ University of Sfax, Sfax, Tunisia.}
\email{\url{kais.feki@hotmail.com}}
\subjclass[2010]{Primary 47A12, 46C05; Secondary 47B65, 47A05.}

\keywords{Positive operator, $A$-adjoint operator, numerical radius, operator matrix, inequality.}

\date{\today}
\author[Kais Feki] {\Large{Kais Feki}$^{1}$}
\title[Some bounds for the $\mathbb{A}$-numerical radius of certain $2 \times 2$ operator matrices]{Some bounds for the $\mathbb{A}$-numerical radius of certain $2 \times 2$ operator matrices}
\maketitle
\begin{abstract}
For a given bounded positive (semidefinite) linear operator $A$ on a complex Hilbert space $\big(\mathcal{H}, \langle \cdot\mid \cdot\rangle \big)$,
we consider the semi-Hilbertian space $\big(\mathcal{H}, \langle \cdot\mid \cdot\rangle_A \big)$ where ${\langle x\mid y\rangle}_A := \langle Ax\mid y\rangle$ for every $x, y\in\mathcal{H}$. The $A$-numerical radius of an $A$-bounded operator $T$ on $\mathcal{H}$ is given by
\begin{align*}
\omega_A(T) = \sup\Big\{\big|{\langle Tx\mid x\rangle}_A\big|\,; \,\,x\in \mathcal{H}, \,{\langle x\mid x\rangle}_A= 1\Big\}.
\end{align*}
Our aim in this paper is to derive several $\mathbb{A}$-numerical radius inequalities for $2\times 2$ operator matrices whose entries are $A$-bounded operators, where $\mathbb{A}=\text{diag}(A,A)$.
\end{abstract}

\section{Introduction and Preliminaries}\label{s1}
Let $\mathcal{H}$ be a complex Hilbert space with inner product $\langle\cdot\mid\cdot\rangle$ and associated norm $\|\cdot\|$. Let $\mathcal{B}(\mathcal{H})$ stand for the $C^{\ast}$-algebra of all bounded linear operators on $\mathcal{H}$. The symbol $I$ denotes the identity operator on $\mathcal{H}$. Let $\mathcal{B}(\mathcal{H})^+$ be the cone of all positive (semi-definite) operators in $\mathcal{B}(\mathcal{H})$, i.e.,
$$\mathcal{B}(\mathcal{H})^+=\left\{A\in \mathcal{B}(\mathcal{H})\,;\;\langle Ax\mid x\rangle\geq 0,\;\forall\;x\in \mathcal{H}\,\right\}.$$
In all what follows, by an operator we mean a bounded linear operator. Moreover, for $T\in\mathcal{B}(\mathcal{H})$, we denote by $\mathcal{N}(T)$ and $\mathcal{R}(T)$ the kernel and the range of $T$, respectively. Furthermore, $T^*$ is the adjoint of $T$. For a given linear subspace $\mathcal{M}$ of $\mathcal{H}$, its closure in the norm topology of $\mathcal{H}$ will be denoted by $\overline{\mathcal{M}}$. In addition, let $P_{\mathcal{S}}$
stand for the orthogonal projection onto a closed subspace $\mathcal{S}$ of $\mathcal{H}$.

Let $A\in \mathcal{B}(\mathcal{H})^+$. Then, $A$ induces the following semi-inner product
$$\langle\cdot\mid\cdot\rangle_{A}:\mathcal{H}\times \mathcal{H}\longrightarrow\mathbb{C},\;(x,y)\longmapsto \langle x\mid y\rangle_{A}:=\langle Ax\mid y\rangle=\langle A^{1/2}x\mid A^{1/2}y\rangle.$$
Here $A^{1/2}$ stands for the square root of $A$. The seminorm induced by ${\langle \cdot\mid \cdot\rangle}_A$ is given by ${\|x\|}_A=\|A^{1/2}x\|$ for all $x\in\mathcal{H}$. One can verify that ${\|\cdot\|}_A$ is a norm if and only if $A$ is one-to-one, and that the seminormed space $(\mathcal{H}, {\|\cdot\|}_A)$ is complete if and only if $\overline{\mathcal{R}(A)}=\mathcal{R}(A)$. The semi-inner product ${\langle \cdot\mid \cdot\rangle}_{A}$ induces on the quotient $\mathcal{H}/\mathcal{N}(A)$ an inner product which is not complete unless $\mathcal{R}(A)$ is closed. However, a canonical construction due to de Branges and Rovnyak \cite{branrov} shows that the completion of $\mathcal{H}/\mathcal{N}(A)$ is isometrically isomorphic to the Hilbert space $\mathcal{R}(A^{1/2})$ endowed with the following inner product
\begin{align}\label{I.1}
\langle A^{1/2}x,A^{1/2}y\rangle_{\mathbf{R}(A^{1/2})}:=\langle P_{\overline{\mathcal{R}(A)}}x\mid P_{\overline{\mathcal{R}(A)}}y\rangle,\quad\forall\, x,y \in \mathcal{H}.
\end{align}
 For the sequel, the Hilbert space $\big(\mathcal{R}(A^{1/2}), \langle\cdot,\cdot\rangle_{\mathbf{R}(A^{1/2})}\big)$ will be denoted by $\mathbf{R}(A^{1/2})$. It is worth noting that $\mathcal{R}(A)$ is dense in $\mathbf{R}(A^{1/2})$ (see \cite{acg3}). For an account of results related to the Hilbert space $\mathbf{R}(A^{1/2})$, the reader is invited to consult \cite{acg3} and the references therein. By using \eqref{I.1}, it can be checked that
\begin{align}\label{I.2}
\langle Ax,Ay\rangle_{\mathbf{R}(A^{1/2})}= {\langle x, y\rangle}_{A},\quad\forall\, x,y \in \mathcal{H}.
\end{align}
Let $T \in \mathcal{B}(\mathcal{H})$. An operator $S\in\mathcal{B}(\mathcal{H})$ is said to be an $A$-adjoint of $T$ if for all $x,y\in \mathcal{H}$, the identity $\langle Tx\mid y\rangle_A=\langle x\mid Sy\rangle_A$ holds (see \cite{acg1}). So, the existence of an $A$-adjoint of $T$ is equivalent to the existence of a solution of the equation $AX = T^*A$. Notice that this kind of equations can be investigated by using a well-known theorem due to Douglas \cite{doug} which briefly says that the operator equation $TX=S$ has a bounded linear solution $X$ if and only if $\mathcal{R}(S) \subseteq \mathcal{R}(T)$ if and only if there exists a positive number $\lambda$ such that $\|S^*x\|\leq \lambda \|T^*x\|$ for all $x\in \mathcal{H}$. Furthermore, among its many solutions it has only one, denoted $Q$, which satisfies $\mathcal{R}(Q) \subseteq \overline{\mathcal{R}(T^{*})}$. Such $Q$ is called the Douglas solution or the reduced solution of the equation $TX=S$. Clearly, the existence of an $A$-adjoint operator is not guaranteed. If we denote by $\mathcal{B}_{A}(\mathcal{H})$ the subspace of all operators admitting $A$-adjoints, then by Douglas theorem, we  have
$$\mathcal{B}_{A}(\mathcal{H})=\left\{T\in \mathcal{B}(\mathcal{H})\,;\;\mathcal{R}(T^{*}A)\subset \mathcal{R}(A)\right\}.$$
If $T\in \mathcal{B}_A(\mathcal{H})$, the reduced solution of the equation
$AX=T^*A$ is a distinguished $A$-adjoint operator of $T$, which is denoted by $T^{\sharp_A}$. Note that, $T^{\sharp_A}=A^\dag T^*A$ in which $A^\dag$ is the Moore-Penrose inverse of $A$ (see \cite{acg2}). Notice that if $T \in \mathcal{B}_A({\mathcal{H}})$, then $T^{\sharp_A} \in \mathcal{B}_A({\mathcal{H}})$, $(T^{\sharp_A})^{\sharp_A}=P_{\overline{\mathcal{R}(A)}}TP_{\overline{\mathcal{R}(A)}}$ and $((T^{\sharp_A})^{\sharp_A})^{\sharp_A}=T$. Moreover, If $S\in \mathcal{B}_A(\mathcal{H})$ then $TS \in\mathcal{B}_A({\mathcal{H}})$ and $(TS)^{\sharp_A}=S^{\sharp_A}T^{\sharp_A}.$ In addition for every $T \in \mathcal{B}_A({\mathcal{H}})$ we have
\begin{equation}\label{diez}
\|T^{\sharp_A}T\|_A = \| TT^{\sharp_A}\|_A=\|T\|_A^2 =\|T^{\sharp_A}\|_A^2.
\end{equation}
For results concerning $T^{\sharp_A}$, we refer the reader to \cite{acg1,acg2}. An operator $U\in  \mathcal{B}_A(\mathcal{H})$ is called $A$-unitary if $\|Ux\|_A=\|U^{\sharp_A}x\|_A=\|x\|_A$ for all $x\in \mathcal{H}$. It should be mention that, an operator $U\in  \mathcal{B}_A(\mathcal{H})$ is $A$-unitary if and only if $U^{\sharp_A} U=(U^{\sharp_A})^{\sharp_A} U^{\sharp_A}=P_{\overline{\mathcal{R}(A)}}$ (see \cite{acg1}).

An operator $T$ is called $A$-bounded if there exists $\lambda>0$ such that $ \|Tx\|_{A} \leq \lambda \|x\|_{A},\;\forall\,x\in \mathcal{H}.$  An application of Douglas theorem shows that the subspace of all operators admitting $A^{1/2}$-adjoints, denoted by $\mathcal{B}_{A^{1/2}}(\mathcal{H})$, is equal the collection of all $A$-bounded operators, i.e.,
$$\mathcal{B}_{A^{1/2}}(\mathcal{H})=\left\{T \in \mathcal{B}(\mathcal{H})\,;\;\exists \,\lambda > 0\,;\;\|Tx\|_{A} \leq \lambda \|x\|_{A},\;\forall\,x\in \mathcal{H}  \right\}.$$
Notice that $\mathcal{B}_{A}(\mathcal{H})$ and $\mathcal{B}_{A^{1/2}}(\mathcal{H})$ are two subalgebras of $\mathcal{B}(\mathcal{H})$ which are, in general, neither closed nor dense in $\mathcal{B}(\mathcal{H})$. Moreover, we have $\mathcal{B}_{A}(\mathcal{H})\subset \mathcal{B}_{A^{1/2}}(\mathcal{H})$ (see \cite{acg1,acg3}). Clearly, $\langle\cdot\mid\cdot\rangle_{A}$ induces a seminorm on $\mathcal{B}_{A^{1/2}}(\mathcal{H})$. Indeed, if $T\in\mathcal{B}_{A^{1/2}}(\mathcal{H})$, then it holds that
\begin{equation}\label{semii}
\|T\|_A:=\sup_{\substack{x\in \overline{\mathcal{R}(A)},\\ x\not=0}}\frac{\|Tx\|_A}{\|x\|_A}=\sup\big\{{\|Tx\|}_A\,; \,\,x\in \mathcal{H},\, {\|x\|}_A =1\big\}<\infty.
\end{equation}
Notice that it was proved in \cite{fg} that for $T\in\mathcal{B}_{A^{1/2}}(\mathcal{H})$ we have
\begin{equation}\label{newsemi}
\|T\|_A=\sup\left\{|\langle Tx\mid y\rangle_A|\,;\;x,y\in \mathcal{H},\,\|x\|_{A}=\|y\|_{A}= 1\right\}.
\end{equation}
Furthermore, the $A$-numerical radius of an operator $T\in\mathcal{B}(\mathcal{H})$ was firstly defined by Saddi in \cite{saddi} by
\begin{align*}
\omega_A(T)
&:= \sup \left\{|\langle Tx\mid x\rangle_A|\,;\;x\in\mathcal{H},\|x\|_A = 1\right\}.
  \end{align*}
It should be emphasized that it may happen that ${\|T\|}_A$ and $\omega_A(T)$ are equal to $+ \infty$ for some $T\in\mathcal{B}(\mathcal{H})\setminus \mathcal{B}_{A^{1/2}}(\mathcal{H})$ (see \cite{feki01}). However, these quantities are equivalent seminorms on $\mathcal{B}_{A^{1/2}}(\mathcal{H})$. More precisely, it was shown in \cite{bakfeki01} that for every $T\in \mathcal{B}_{A^{1/2}}(\mathcal{H})$, we have
\begin{equation}\label{refine1}
\tfrac{1}{2} \|T\|_A\leq\omega_A(T) \leq \|T\|_A.
\end{equation}
Notice that if $T\in\mathcal{B}_{A^{1/2}}(\mathcal{H})$ and satisfies $AT^2=0$, then by \cite[Corollary 2]{feki01} we have
\begin{equation}\label{at2}
\omega_A(T)= \frac{1}{2}\|T\|_A.
\end{equation}
Notice that the $A$-numerical radius of semi-Hilbertian space operators satisfies the weak $A$-unitary invariance property which asserts that
\begin{equation}\label{weak}
\omega_{A}(U^{\sharp}TU)=\omega_{A}(T),
\end{equation}
for every $T\in \mathcal{B}_{A^{1/2}}(\mathcal{H})$ and every $A$-unitary operator $U\in \mathcal{B}_{A}(\mathcal{H})$ (see \cite[Lemma 3.8]{bfeki}).

For the sequel, for any arbitrary operator $T\in {\mathcal B}_A({\mathcal H})$, we write
$$\Re_A(T):=\frac{T+T^{\sharp_A}}{2}\;\;\text{ and }\;\;\Im_A(T):=\frac{T-T^{\sharp_A}}{2i}.$$
It has recently been shown in \cite[Theorem 2.5]{zamani1} that if $T\in\mathcal{B}_{A}(\mathcal{H})$, then
\begin{align}\label{zm}
\omega_A(T) = \displaystyle{\sup_{\theta \in \mathbb{R}}}{\left\|\Re_A(e^{i\theta}T)\right\|}_A=\displaystyle{\sup_{\theta \in \mathbb{R}}}{\left\|\Im_A(e^{i\theta}T)\right\|}_A.
\end{align}
	Let $T\in \mathcal{B}(\mathcal{H})$. Then, it was shown in \cite[Proposition 3.6.]{acg3} that $T\in \mathcal{B}_{A^{1/2}}(\mathcal{H})$ if and only if there exists a unique $\widetilde{T}\in \mathcal{B}(\mathbf{R}(A^{1/2}))$ such that $Z_AT =\widetilde{T}Z_A$. Here, $Z_{A}: \mathcal{H} \rightarrow \mathbf{R}(A^{1/2})$ is	defined by $Z_{A}x = Ax$. It has been shown in \cite{feki01} that for every $T\in \mathcal{B}_{A^{1/2}}(\mathcal{H})$ we have
\begin{equation}\label{tilde}
\|T\|_A=\|\widetilde{T}\|_{\mathcal{B}(\mathbf{R}(A^{1/2}))}\quad\text{ and }\quad \omega_A(T)=\omega(\widetilde{T}).
\end{equation}

Recently, the concept of the $A$-spectral radius of $A$-bounded operators has been introduced in \cite{feki01} as follows:
\begin{equation}\label{newrad}
r_A(T):=\displaystyle\inf_{n\geq 1}\|T^n\|_A^{\frac{1}{n}}=\displaystyle\lim_{n\to\infty}\|T^n\|_A^{\frac{1}{n}}.
\end{equation}
We note here that the second equality in \eqref{newrad} is also proved in \cite[Theorem 1]{feki01}. Moreover, like the classical spectral radius of Hilbert space operators, it was shown in \cite{feki01} that $r_A(\cdot)$ satisfies the commutativity property, which asserts that
\begin{equation}\label{commut}
r_A(TS)=r_A(ST),
\end{equation}
for all $T,S\in \mathcal{B}_{A^{1/2}}(\mathcal{H})$.

An operator $T\in\mathcal{B}(\mathcal{H})$ is said to be $A$-selfadjoint if $AT$ is selfadjoint, that is, $AT = T^*A$. Moreover, it was shown in \cite{feki01} that if $T$ is $A$-self-adjoint, then
\begin{equation}\label{aself1}
\|T\|_{A}=\omega_A(T)=r_A(T).
\end{equation}
In addition, an operator $T$ is called $A$-positive if $AT\geq0$ and we write $T\geq_{A}0$. Obviously, an $A$-positive operator is always an $A$-selfadjoint operator since $\mathcal{H}$ is a complex Hilbert space. If $T,S\in\mathcal{B}(\mathcal{H})$ and satisfies $T-S\geq_{A}0$, then we will write $T\geq_{A}S$. For the sequel, if $A=I$ then $\|T\|$, $r(T)$ and $\omega(T)$ denote respectively the classical operator norm, the spectral radius and the numerical radius of an operator $T$. In recent years, several results covering some classes of operators on a complex Hilbert space $\big(\mathcal{H}, \langle \cdot\mid \cdot\rangle\big)$ were extended to $\big(\mathcal{H}, {\langle \cdot\mid \cdot\rangle}_A\big)$. Of course, the extension is not trivial since many difficulties arise. For instance, as it is mention above, it may happen
that ${\|T\|}_A = \infty$ for some $T\in \mathcal{B}(\mathcal{H})$. Moreover, not any operator admits an adjoint operator for the
semi-inner product ${\langle \cdot\mid \cdot\rangle}_A$. In addition, for $T \in \mathcal{B}_A({\mathcal{H}})$ we have $(T^{\sharp_A})^{\sharp_A}=P_{\overline{\mathcal{R}(A)}}TP_{\overline{\mathcal{R}(A)}}\neq T$. The reader is invited to see \cite{bakfeki01,bakfeki04,bfeki,feki03,zamani2,tamzhang,zamani1,zamani3} and the references therein.

In this paper, we consider the ${2\times 2}$ operator diagonal matrix $\mathbb{A}=\begin{pmatrix}
A &0\\
0 &A
\end{pmatrix}$. Clearly, $\mathbb{A}\in \mathcal{B}(\mathcal{H}\oplus \mathcal{H})^+$. So, $\mathbb{A}$ induces the following semi-inner product
$$\langle x, y\rangle_{\mathbb{A}}= \langle \mathbb{A}x, y\rangle=\langle x_1\mid y_1\rangle_A+\langle x_2\mid y_2\rangle_A,$$
 for all $x=(x_1,x_2)\in \mathcal{H}\oplus \mathcal{H}$ and $y=(y_1,y_2)\in \mathcal{H}\oplus \mathcal{H}$. Notice that if $T_{ij}$ are operators in $\mathcal{B}_{A}(\mathcal{H})$ for all $i,j\in\{1,2\}$. Then, it was shown in \cite[Lemma 3.1]{bfeki} that $(T_{ij})_{2 \times 2}\in \mathcal{B}_{\mathbb{A}}(\mathcal{H}\oplus \mathcal{H})$ and
\begin{equation}\label{diez2}
\begin{pmatrix}
T_{11}&T_{12} \\
T_{21}&T_{22}
\end{pmatrix}^{\sharp_\mathbb{A}}=\begin{pmatrix}
T^{\sharp_A}_{11} & T^{\sharp_A}_{21} \\
T^{\sharp_A}_{12} & T^{\sharp_A}_{22}
\end{pmatrix}.
\end{equation}
 Very recently, several inequalities for the $\mathbb{A}$-numerical radius of $2 \times 2$ operator matrices have been established by P. Bhunia et al. (see \cite{BPN}). This paper is devoted also to prove several new $\mathbb{A}$-numerical radius inequalities of certain $2 \times 2$ operator matrices. Some of the obtained results cover and extend the following works \cite{pinar,HirKit,S}.

 \section{Results}\label{s2}
In this section, we present our results. Throughout this section $\mathbb{A}$ is denoted to be the $2\times 2$ operator diagonal matrix whose each diagonal entry is the positive operator $A$. To prove our two next results, the following lemma concerning $\mathbb{A}$-numerical radius inequalities is required. Notice that the first assertion is proved in \cite{BPN} for operators in $\mathcal{B}_A(\mathcal{H})$.

\begin{lemma}\label{lem01}
Let $P,Q,R,S\in \mathcal{B}_{A^{1/2}}(\mathcal{H})$. Then, the following assertions hold:
\begin{itemize}
  \item [(a)] $\omega_{\mathbb{A}}\left[\begin{pmatrix}
P&0\\
0 &S
\end{pmatrix}\right]=\max\{\omega_A(P),\omega_A(S)\}.$
  \item [(b)] $\omega_{\mathbb{A}}\left[\begin{pmatrix}
P &0\\
0 &S
\end{pmatrix}\right]\leq\omega_{\mathbb{A}}\left[\begin{pmatrix}
P &Q\\
R &S
\end{pmatrix}\right].$
  \item [(c)] $\omega_{\mathbb{A}}\left[\begin{pmatrix}
0 &Q\\
R &0
\end{pmatrix}\right]\leq\omega_{\mathbb{A}}\left[\begin{pmatrix}
P &Q\\
R &S
\end{pmatrix}\right].$
\end{itemize}
\end{lemma}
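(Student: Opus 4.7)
The plan is to prove all three assertions directly from the definition of the $\mathbb{A}$-numerical radius, exploiting the explicit expansion
\begin{equation*}
\langle Tx,x\rangle_{\mathbb{A}}=\sum_{i,j=1}^{2}\langle T_{ij}x_j,x_i\rangle_A
\end{equation*}
for a block operator $T=(T_{ij})$, a unit vector $x=(x_1,x_2)$ (so that $\|x_1\|_A^2+\|x_2\|_A^2=1$). Throughout, I will repeatedly use the fact that $|\langle Py,y\rangle_A|\le\omega_A(P)\|y\|_A^2$ for every $y\in\mathcal H$, which follows by normalisation when $\|y\|_A>0$ and is automatic when $\|y\|_A=0$, since in that case $A^{1/2}y=0$ and hence $\langle Py,y\rangle_A=\langle A^{1/2}Py,A^{1/2}y\rangle=0$.

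For (a), the upper bound will follow by applying the triangle inequality together with the above observation:
\begin{equation*}
|\langle\mathrm{diag}(P,S)x,x\rangle_{\mathbb{A}}|\le\omega_A(P)\|x_1\|_A^2+\omega_A(S)\|x_2\|_A^2\le\max\{\omega_A(P),\omega_A(S)\}.
\end{equation*}
For the matching lower bound, I would test against unit vectors of the form $(x_1,0)$ and $(0,x_2)$ with $\|x_i\|_A=1$, which recover $\omega_A(P)$ and $\omega_A(S)$ respectively inside the supremum defining $\omega_{\mathbb{A}}(\mathrm{diag}(P,S))$.

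For (b) and (c) I would handle both simultaneously via the sign-flip $y:=(x_1,-x_2)$, which is again of unit $\mathbb{A}$-norm whenever $x$ is. A direct expansion using the displayed formula yields the polarisation-type identities
\begin{equation*}
\tfrac12\bigl[\langle Tx,x\rangle_{\mathbb{A}}+\langle Ty,y\rangle_{\mathbb{A}}\bigr]=\langle\mathrm{diag}(P,S)x,x\rangle_{\mathbb{A}},
\end{equation*}
\begin{equation*}
\tfrac12\bigl[\langle Tx,x\rangle_{\mathbb{A}}-\langle Ty,y\rangle_{\mathbb{A}}\bigr]=\Bigl\langle\begin{pmatrix}0&Q\\R&0\end{pmatrix}x,x\Bigr\rangle_{\mathbb{A}}.
\end{equation*}
Taking moduli, applying the triangle inequality, bounding each of $|\langle Tx,x\rangle_{\mathbb{A}}|$ and $|\langle Ty,y\rangle_{\mathbb{A}}|$ by $\omega_{\mathbb{A}}(T)$, and finally passing to the supremum over unit $x$, will deliver both (b) and (c) at once.

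No serious obstacle is expected. The only subtlety is that the hypothesis places $P,Q,R,S$ in $\mathcal B_{A^{1/2}}(\mathcal H)$ rather than in $\mathcal B_A(\mathcal H)$, so the formula \eqref{diez2} for $T^{\sharp_{\mathbb{A}}}$ and the $\Re_{\mathbb{A}}$-characterisation \eqref{zm} are not directly available; this is why I prefer the purely definitional route above, which needs no $\mathbb{A}$-adjoint. Finiteness of every seminorm and numerical radius appearing in the argument is ensured by \eqref{refine1} together with \eqref{semii}.
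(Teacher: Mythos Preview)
Your argument is correct. The underlying idea for (b) and (c) is the same as the paper's --- both exploit the sign flip $(x_1,x_2)\mapsto(x_1,-x_2)$ to average the full matrix down to its diagonal or off-diagonal part --- but the implementations sit at different levels of abstraction. The paper works at the operator level: it writes $\mathrm{diag}(P,S)=\tfrac12\,T+\tfrac12\,\mathbb{U}^{\sharp_{\mathbb{A}}}T\mathbb{U}$ with $\mathbb{U}=\mathrm{diag}(-I,I)$, invokes the weak $\mathbb{A}$-unitary invariance \eqref{weak} to obtain $\omega_{\mathbb{A}}(\mathbb{U}^{\sharp_{\mathbb{A}}}T\mathbb{U})=\omega_{\mathbb{A}}(T)$, and then applies subadditivity of $\omega_{\mathbb{A}}$. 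You carry out the same averaging directly at the inner-product level via your polarisation identities, which is more elementary and --- as you rightly note --- sidesteps any use of \eqref{diez2} or the $\mathbb{A}$-adjoint machinery, matching neatly the hypothesis $P,Q,R,S\in\mathcal{B}_{A^{1/2}}(\mathcal{H})$. The paper's route is still legitimate under that hypothesis, since only $\mathbb{U}$ (whose entries are $\pm I\in\mathcal{B}_A(\mathcal{H})$) is asked to have an $\mathbb{A}$-adjoint; but your version makes the self-containedness transparent. For (a), the paper simply defers to \cite{BPN}; your direct test-vector argument is complete on its own.
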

\begin{proof}
\noindent (a)\;Follows by proceeding as in the proof of \cite[Lemma 2.4.]{BPN}.
\par \vskip 0.1 cm \noindent (b)\;Clearly we have
\begin{equation}\label{pinching}
 \begin{pmatrix}
P &0\\
0 &S
\end{pmatrix} = \frac{1}{2} \begin{pmatrix}
P &Q\\
R &S
\end{pmatrix} + \frac{1}{2} \begin{pmatrix}
P &-Q\\
-R &S
\end{pmatrix}.
\end{equation}
Let $\mathbb{U}=\begin{pmatrix}
-I&O \\
O&I
\end{pmatrix}.$ In view of \eqref{diez2} we have $\mathbb{U}^{\sharp_{\mathbb{A}}}=\begin{pmatrix}
-P_{\overline{\mathcal{R}(A)}}&O \\
O&P_{\overline{\mathcal{R}(A)}}
\end{pmatrix}.$ So, we verify that $\|\mathbb{U}x\|_\mathbb{A}=\|\mathbb{U}^{\sharp_\mathbb{A}}x\|_\mathbb{A}=\|x\|_\mathbb{A}$ for all $x=(x_1,x_2)\in \mathcal{H}\oplus \mathcal{H}$. Hence, $\mathbb{U}$ is $\mathbb{A}$-unitary operator. Thus, by \eqref{weak} we have
\begin{align*}
\omega_{\mathbb{A}}\left[\begin{pmatrix}
P &Q\\
R &S
\end{pmatrix}\right]
& =\omega_\mathbb{A}\left[\mathbb{U}^{\sharp_A}\begin{pmatrix}
P &Q\\
R &S
\end{pmatrix}\mathbb{U}\right]\\
& =\omega_\mathbb{A}\left[\mathbb{U}^{\sharp_A}\begin{pmatrix}
P &Q\\
R &S
\end{pmatrix}\mathbb{U}\right]\\
& =\omega_\mathbb{A}\left[\begin{pmatrix}
P_{\overline{\mathcal{R}(A)}}&O \\
O&P_{\overline{\mathcal{R}(A)}}
\end{pmatrix}\begin{pmatrix}
P &-Q\\
-R &S
\end{pmatrix}\right]\\
&=\omega_{\mathbb{A}}\left[\begin{pmatrix}
P &-Q\\
-R &S
\end{pmatrix}\right]
\end{align*}
So, by taking into consideration \eqref{pinching} and the triangle inequality we prove the desired result.
\par \vskip 0.1 cm \noindent (b)\;Let $\mathbb{U}=\begin{pmatrix}
I&O \\
O&-I
\end{pmatrix}.$ By proceeding similarly as above, we prove that $\mathbb{U}$ is $\mathbb{A}$-unitary and
$$\omega_\mathbb{A}\left[\begin{pmatrix}
P &Q\\
R &S
\end{pmatrix}\right]=\omega_\mathbb{A}\left[\begin{pmatrix}
-P &Q\\
R &-S
\end{pmatrix}\right].$$
Moreover, by using the fact that
\begin{equation*}
 \begin{pmatrix}
0 &Q\\
R &0
\end{pmatrix} = \frac{1}{2} \begin{pmatrix}
P &Q\\
R &S
\end{pmatrix} + \frac{1}{2} \begin{pmatrix}
-P &Q\\
R &-S
\end{pmatrix},
\end{equation*}
and the subadditivity of the $\mathbb{A}$-numerical radius $\omega_{\mathbb{A}}(\cdot)$, we get the required result.
\end{proof}
Also, we need the following lemma.
 \begin{lemma}(\cite{feki04})\label{max}
Let $T,S\in \mathcal{B}_{A^{1/2}}(\mathcal{H})$. Then,
$$\left\|\begin{pmatrix}
0&T\\
S &0
\end{pmatrix}\right\|_{\mathbb{A}}=\left\|\begin{pmatrix}
T&0\\
0 &S
\end{pmatrix}\right\|_{\mathbb{A}}=\max\{\|T\|_A,\|S\|_A \}.$$
\end{lemma}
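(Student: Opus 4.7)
The plan is to handle the diagonal identity first, and then reduce the off-diagonal case to it via an $\mathbb{A}$-unitary swap, reusing the strategy employed in the proof of \cref{lem01}.

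For the diagonal case, any vector $x=(x_1,x_2)\in\mathcal{H}\oplus\mathcal{H}$ satisfies $\|x\|_{\mathbb{A}}^2=\|x_1\|_A^2+\|x_2\|_A^2$, and a direct expansion gives
$$\left\|\begin{pmatrix}T&0\\0&S\end{pmatrix}x\right\|_{\mathbb{A}}^2=\|Tx_1\|_A^2+\|Sx_2\|_A^2.$$
Applying the defining bounds $\|Tx_1\|_A\leq\|T\|_A\|x_1\|_A$ and $\|Sx_2\|_A\leq\|S\|_A\|x_2\|_A$ immediately yields the upper estimate $\leq\max\{\|T\|_A,\|S\|_A\}$. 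The matching lower bound comes from testing on $x=(x_1,0)$ with $\|x_1\|_A=1$ and $\|Tx_1\|_A$ close to $\|T\|_A$ (using \eqref{semii}), and symmetrically on $x=(0,x_2)$, so that the supremum in \eqref{semii} is saturated on each factor separately.

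For the off-diagonal identity, I would introduce the flip $\mathbb{V}=\begin{pmatrix}0&I\\I&0\end{pmatrix}$. By \eqref{diez2}, we have $\mathbb{V}^{\sharp_{\mathbb{A}}}=\begin{pmatrix}0&P_{\overline{\mathcal{R}(A)}}\\P_{\overline{\mathcal{R}(A)}}&0\end{pmatrix}$, and a quick check confirms $\|\mathbb{V}x\|_{\mathbb{A}}=\|\mathbb{V}^{\sharp_{\mathbb{A}}}x\|_{\mathbb{A}}=\|x\|_{\mathbb{A}}$ for every $x\in\mathcal{H}\oplus\mathcal{H}$, so $\mathbb{V}$ is $\mathbb{A}$-unitary. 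Writing
$$\begin{pmatrix}0&T\\S&0\end{pmatrix}=\mathbb{V}\begin{pmatrix}S&0\\0&T\end{pmatrix},$$
and using that left multiplication by an $\mathbb{A}$-isometry preserves the $\mathbb{A}$-seminorm, the $\mathbb{A}$-norm of the off-diagonal matrix coincides with that of $\mathrm{diag}(S,T)$, which by the previous step equals $\max\{\|S\|_A,\|T\|_A\}$.

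No serious obstacle arises: the only non-bookkeeping ingredient is the verification that $\mathbb{V}$ is $\mathbb{A}$-unitary, but this mirrors the sign-flip computation already performed in the proof of \cref{lem01}(b), and the rest is a routine supremum estimate. As an alternative, one could bypass $\mathbb{V}$ altogether by observing that $\begin{pmatrix}0&T\\S&0\end{pmatrix}x$ has $\mathbb{A}$-seminorm squared equal to $\|Tx_2\|_A^2+\|Sx_1\|_A^2$, and then repeat verbatim the argument of the diagonal case.
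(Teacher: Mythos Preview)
Your argument is correct. The paper, however, does not supply its own proof of this lemma: it is quoted from \cite{feki04} without proof, so there is nothing in the present paper to compare your approach against. Your direct computation for the diagonal block and the reduction of the off-diagonal case via the $\mathbb{A}$-unitary flip $\mathbb{V}$ (or, equivalently, the alternative direct expansion you mention at the end) are both standard and complete; either one would serve as a self-contained proof of the lemma.
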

Now, we are in a position to prove our first result in this paper.
 \begin{theorem}
Let $\mathbb{T}=\begin{pmatrix}
P &Q\\
R &S
\end{pmatrix}$ be such that $P, Q, R, S\in \mathcal{B}_{A^{1/2}}(\mathcal{H})$. Then,
\begin{equation}
\lambda_1\leq\omega_{\mathbb{A}}\left[\begin{pmatrix}
P &Q\\
R &S
\end{pmatrix}\right]\leq \lambda_2,
\end{equation}
where $$\lambda_1=\max\left\{\omega_{\mathbb{A}}\left[\begin{pmatrix}
0 &Q\\
R &0
\end{pmatrix}\right],\max\{\omega_A(P),\omega_A(S)\}\right\}$$ and
$$\lambda_2=\frac{\|Q\|_A+\|R\|_A}{2}+\max\big\{\omega_A(P),\omega_A(S)\big\}.$$
\end{theorem}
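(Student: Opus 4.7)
The plan is to handle the two bounds separately, using Lemma \ref{lem01} for the lower bound and a carefully chosen sum decomposition for the upper bound.

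For the lower bound, I would simply appeal directly to parts (b) and (c) of Lemma \ref{lem01}. Part (b) says
\[
\omega_{\mathbb{A}}\!\begin{pmatrix} P & Q \\ R & S \end{pmatrix} \;\geq\; \omega_{\mathbb{A}}\!\begin{pmatrix} P & 0 \\ 0 & S \end{pmatrix},
\]
and this last quantity equals $\max\{\omega_A(P),\omega_A(S)\}$ by part (a). Part (c) says
\[
\omega_{\mathbb{A}}\!\begin{pmatrix} P & Q \\ R & S \end{pmatrix} \;\geq\; \omega_{\mathbb{A}}\!\begin{pmatrix} 0 & Q \\ R & 0 \end{pmatrix}.
\]
Taking the larger of these two right-hand sides yields $\lambda_1$.

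For the upper bound, the starting idea is the decomposition
\[
\begin{pmatrix} P & Q \\ R & S \end{pmatrix} = \begin{pmatrix} P & 0 \\ 0 & S \end{pmatrix} + \begin{pmatrix} 0 & Q \\ R & 0 \end{pmatrix}
\]
and the subadditivity of $\omega_{\mathbb{A}}(\cdot)$ as a seminorm on $\mathcal{B}_{\mathbb{A}^{1/2}}(\mathcal{H}\oplus\mathcal{H})$. The diagonal term contributes $\max\{\omega_A(P),\omega_A(S)\}$ by Lemma \ref{lem01}(a). It then remains to show
\[
\omega_{\mathbb{A}}\!\begin{pmatrix} 0 & Q \\ R & 0 \end{pmatrix} \;\leq\; \frac{\|Q\|_A+\|R\|_A}{2}.
\]
This is the key step, and the trick is a second splitting: write
\[
\begin{pmatrix} 0 & Q \\ R & 0 \end{pmatrix} = \begin{pmatrix} 0 & Q \\ 0 & 0 \end{pmatrix} + \begin{pmatrix} 0 & 0 \\ R & 0 \end{pmatrix}.
\]
Each summand squares to the zero matrix, so its square is annihilated by $\mathbb{A}$, and hence \eqref{at2} applies and gives $\omega_{\mathbb{A}}$ equal to one-half the $\|\cdot\|_{\mathbb{A}}$-seminorm. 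By Lemma \ref{max} those seminorms are exactly $\|Q\|_A$ and $\|R\|_A$ respectively, so subadditivity of $\omega_{\mathbb{A}}$ on this second split delivers the required bound.

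The only point that requires any care is the legitimacy of invoking \eqref{at2} at the level of $2\times 2$ operator matrices: one must know that each strictly triangular matrix lies in $\mathcal{B}_{\mathbb{A}^{1/2}}(\mathcal{H}\oplus\mathcal{H})$ (which follows because its entries are in $\mathcal{B}_{A^{1/2}}(\mathcal{H})$) and that its $\mathbb{A}$-square vanishes (immediate). Once this is noted, combining the two splittings with the diagonal evaluation produces
\[
\omega_{\mathbb{A}}\!\begin{pmatrix} P & Q \\ R & S \end{pmatrix} \;\leq\; \max\{\omega_A(P),\omega_A(S)\} + \frac{\|Q\|_A+\|R\|_A}{2} = \lambda_2,
\]
completing the argument. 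I do not expect any significant obstacle; the only nontrivial ingredient is the recognition that the nilpotency identity \eqref{at2} is precisely what makes the off-diagonal bound tighter than the crude $\omega_{\mathbb{A}} \leq \|\cdot\|_{\mathbb{A}} = \max\{\|Q\|_A,\|R\|_A\}$ one would otherwise obtain.
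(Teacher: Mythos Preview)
Your proposal is correct and follows essentially the same route as the paper: the paper uses the single three-term split $\mathbb{T}=\begin{pmatrix}P&0\\0&S\end{pmatrix}+\begin{pmatrix}0&Q\\0&0\end{pmatrix}+\begin{pmatrix}0&0\\R&0\end{pmatrix}$ directly, whereas you do it in two stages, but the ingredients (nilpotency plus \eqref{at2}, Lemma~\ref{max}, Lemma~\ref{lem01}, and subadditivity) are identical. The lower bound argument via Lemma~\ref{lem01}(a)--(c) is also exactly what the paper does.
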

\begin{proof}
Clearly we have
\begin{equation}\label{triv}
 \begin{pmatrix}
P &Q\\
R &S
\end{pmatrix} = \begin{pmatrix}
P &0\\
0 &S
\end{pmatrix}+\begin{pmatrix}
0 &Q\\
0 &0
\end{pmatrix}+\begin{pmatrix}
0 &0\\
R &0
\end{pmatrix}.
\end{equation}
On the other, it is not difficult to see that $\mathbb{A}\begin{pmatrix}
0 &Q\\
0 &0
\end{pmatrix}^2=\begin{pmatrix}
0 &0\\
0 &0
\end{pmatrix}$ and $\mathbb{A}\begin{pmatrix}
0 &0\\
R &0
\end{pmatrix}^2=\begin{pmatrix}
0 &0\\
0 &0
\end{pmatrix}$. So, by \eqref{at2} and Lemma \ref{max} we have
$$\omega_{\mathbb{A}}\left[\begin{pmatrix}
0 &Q\\
0 &0
\end{pmatrix}\right]=\frac{1}{2}\left\|\begin{pmatrix}
0 &Q\\
0 &0
\end{pmatrix}\right\|_{\mathbb{A}}=\frac{1}{2}\|Q\|_A.$$
Similarly, we have $\omega_{\mathbb{A}}\left[\begin{pmatrix}
0 &0\\
R &0
\end{pmatrix}\right]=\frac{1}{2}\|R\|_A.$ So, by using the trivial observation \eqref{triv} and the subadditivity of the $\mathbb{A}$-numerical radius $\omega_{\mathbb{A}}(\cdot)$ together with Lemma \ref{lem01} (a), we get
\begin{equation}\label{r1}
\omega_{\mathbb{A}}\left[\begin{pmatrix}
P &Q\\
R &S
\end{pmatrix}\right]\leq \max\{\omega_A(P),\omega_A(S)\}+\frac{\|Q\|_A+\|R\|_A}{2}.
\end{equation}
On the other hand, by Lemmas \ref{lem01} and \ref{max} we have
\begin{align}\label{r2}
\omega_{\mathbb{A}}\left[\begin{pmatrix}
P &Q\\
R &S
\end{pmatrix}\right]
& \geq \max\left\{\omega_{\mathbb{A}}\left[\begin{pmatrix}
0 &Q\\
R &0
\end{pmatrix}\right],\omega_{\mathbb{A}}\left[\begin{pmatrix}
P &0\\
0 &S
\end{pmatrix}\right]\right\} \nonumber\\
 &=\max\left\{\omega_{\mathbb{A}}\left[\begin{pmatrix}
0 &Q\\
R &0
\end{pmatrix}\right],\max\{\omega_A(P),\omega_A(S)\}\right\}.
\end{align}
By combining \eqref{r1} together with \eqref{r2}, we reach the desired result.
\end{proof}

In order to prove our next result, we need the following lemma.
\begin{lemma}\label{lpos}
Let $T,S\in \mathcal{B}(\mathcal{H})$ be two $A$-positive operators. Then,
\begin{equation}
\omega_{\mathbb{A}}\left[\begin{pmatrix}0&T\\S&0\end{pmatrix}\right]= \frac{1}{2}\left\|T+S\right\|_A.
\end{equation}
\end{lemma}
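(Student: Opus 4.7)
The plan is to prove the equality by two matching inequalities. Write $\mathbb{M}:=\begin{pmatrix}0 & T\\ S & 0\end{pmatrix}$. The preliminary observation is that $T,S\geq_A 0$ forces $A(T+S)=AT+AS\geq 0$, so $T+S$ is $A$-positive and in particular $A$-selfadjoint; by \eqref{aself1} this gives $\|T+S\|_A=\omega_A(T+S)$. This identification is used on both sides.

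For the lower bound I would test $\omega_{\mathbb{A}}(\mathbb{M})$ against the specific unit vector $\widetilde{x}=\tfrac{1}{\sqrt{2}}(x,x)$, where $x\in\mathcal{H}$ with $\|x\|_A=1$. A short computation gives $\|\widetilde{x}\|_{\mathbb{A}}=1$ and
\[
\langle \mathbb{M}\widetilde{x},\widetilde{x}\rangle_{\mathbb{A}}=\tfrac{1}{2}\bigl(\langle Tx,x\rangle_A+\langle Sx,x\rangle_A\bigr)=\tfrac{1}{2}\langle(T+S)x,x\rangle_A\geq 0.
\]
Taking the supremum over $x$ and using the preliminary identification yields $\omega_{\mathbb{A}}(\mathbb{M})\geq \tfrac{1}{2}\|T+S\|_A$.

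For the upper bound I would work directly from the definition of $\omega_{\mathbb{A}}$ rather than going through \eqref{zm}. For any $\widetilde{x}=(x_1,x_2)$ with $\|x_1\|_A^2+\|x_2\|_A^2=1$, the $A$-positivity of $T$ and $S$ allows the factorizations $\langle Tx_2,x_1\rangle_A=\langle (AT)^{1/2}x_2,(AT)^{1/2}x_1\rangle$ and $\langle Sx_1,x_2\rangle_A=\langle (AS)^{1/2}x_1,(AS)^{1/2}x_2\rangle$. One application of Cauchy--Schwarz to each inner product, followed by a second Cauchy--Schwarz in $\mathbb{R}^2$ applied to the vectors $\bigl(\|(AT)^{1/2}x_i\|,\|(AS)^{1/2}x_i\|\bigr)$ for $i=1,2$, produces
\[
|\langle \mathbb{M}\widetilde{x},\widetilde{x}\rangle_{\mathbb{A}}|\leq \sqrt{\langle(T+S)x_1,x_1\rangle_A\,\langle(T+S)x_2,x_2\rangle_A}.
\]
Combining $\langle(T+S)x_i,x_i\rangle_A\leq \omega_A(T+S)\|x_i\|_A^2=\|T+S\|_A\|x_i\|_A^2$ with the AM--GM inequality on $\|x_1\|_A,\|x_2\|_A$ reduces the right-hand side to $\tfrac{1}{2}\|T+S\|_A$, so $\omega_{\mathbb{A}}(\mathbb{M})\leq \tfrac{1}{2}\|T+S\|_A$.

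The main technical subtlety lies in the upper bound. The more natural route, applying \eqref{zm} and Lemma \ref{max} to $\Re_{\mathbb{A}}(e^{i\theta}\mathbb{M})$, reduces the problem to proving $\|\alpha T+\beta S\|_A\leq\|T+S\|_A$ for $|\alpha|=|\beta|=1$, and this is strictly stronger than what the triangle inequality gives, since $\|T\|_A+\|S\|_A$ can strictly exceed $\|T+S\|_A$ (for instance, for $A$-positive operators with disjoint $A$-supports). The double Cauchy--Schwarz step above sidesteps this obstruction by exploiting the canonical factorizations through $(AT)^{1/2}$ and $(AS)^{1/2}$ simultaneously.
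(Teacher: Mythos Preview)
Your proof is correct and takes a genuinely different route from the paper's. The paper does not argue directly at all: it lifts the problem to the Hilbert space $\mathbf{R}(A^{1/2})$ via the map $Z_A$, checks that $A$-positivity of $T,S$ forces the transferred operators $\widetilde{T},\widetilde{S}\in\mathcal{B}(\mathbf{R}(A^{1/2}))$ to be positive in the ordinary sense, and then quotes the known classical identity $\omega\!\left[\begin{smallmatrix}0&\widetilde{T}\\ \widetilde{S}&0\end{smallmatrix}\right]=\tfrac12\|\widetilde{T}+\widetilde{S}\|$ from \cite{OK2}, transferring back through \eqref{tilde}. Your argument, by contrast, is entirely self-contained: the lower bound is the obvious diagonal test vector, and the upper bound is a clean two-step Cauchy--Schwarz exploiting the factorization $\langle Tx_2,x_1\rangle_A=\langle (AT)^{1/2}x_2,(AT)^{1/2}x_1\rangle$. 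What you gain is an elementary proof that avoids both the $\mathbf{R}(A^{1/2})$ machinery and the external reference; what the paper's approach buys is brevity (once the lifting is in place) and a template that transports other classical numerical-radius identities to the semi-Hilbertian setting without redoing the estimates. One small point worth making explicit in your write-up: the inequality $\langle(T+S)x_i,x_i\rangle_A\le\|T+S\|_A\,\|x_i\|_A^2$ also holds when $\|x_i\|_A=0$, because $A$-positivity forces $T+S\in\mathcal{B}_{A^{1/2}}(\mathcal{H})$ and hence $A(T+S)x_i=0$ in that case.
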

\begin{proof}
Since $T$ and $S$ are $A$-positive, then $T,S\in \mathcal{B}_{A^{1/2}}(\mathcal{H})$. So, by \cite[Proposition 3.6.]{acg3} there exists two unique operators $\widetilde{T},\widetilde{S}\in \mathcal{B}(\mathbf{R}(A^{1/2}))$ such that $Z_AT =\widetilde{T}Z_A$ and $Z_AS =\widetilde{S}Z_A$. Moreover, since $T\geq_A 0$, then for all $x\in \mathcal{H}$ we have
\begin{equation*}
\langle ATx\mid x\rangle\geq 0.
\end{equation*}
This implies, through \eqref{I.2}, that
\begin{align*}
\langle Tx\mid x\rangle_A=\langle ATx,Ax\rangle_{\mathbf{R}(A^{1/2})}=\langle \widetilde{T}Ax,Ax\rangle_{\mathbf{R}(A^{1/2})}\geq 0
\end{align*}
for all $x\in \mathcal{H}$. Further, by using the density of $\mathcal{R}(A)$ in $\mathbf{R}(A^{1/2})$, we obtain
\begin{equation*}
\langle \widetilde{T}A^{1/2}x,A^{1/2}x\rangle_{\mathbf{R}(A^{1/2})}\geq0,\;\;\forall\,x\in \mathcal{H}.
\end{equation*}
So, $\widetilde{T}$ is a positive operator on the Hilbert space $\mathbf{R}(A^{1/2})$. Similarly, we prove that $\widetilde{S}\geq 0$. Therefore, in view of \cite[Corollary3.]{OK2} we have
\begin{equation}\label{one}
\omega\left[\begin{pmatrix}0&\widetilde{T}\\ \widetilde{S}&0\end{pmatrix}\right]= \frac{1}{2}\left\|\widetilde{T}+\widetilde{S}\right\|_{\mathcal{B}(\mathbf{R}(A^{1/2}))}=\frac{1}{2}\left\|\widetilde{T+S}\right\|_{\mathcal{B}(\mathbf{R}(A^{1/2}))},
\end{equation}
where the last equality follows since $\widetilde{T+S}=\widetilde{T}+\widetilde{S}$. Moreover, by \cite[Lemma 3.2]{bfeki}, we have $\begin{pmatrix}0&T\\ S&0\end{pmatrix}\in \mathcal{B}_{\mathbb{A}^{1/2}}(\mathcal{H}\oplus \mathcal{H})$ and
$$ \widetilde{\begin{pmatrix}0&T\\ S&0\end{pmatrix}}=\begin{pmatrix}0&\widetilde{T}\\ \widetilde{S}&0\end{pmatrix}.$$
This proves the desired result by applying \eqref{one} together with \eqref{tilde}.
\end{proof}

We are now in a position to state the following theorem.
\begin{theorem}\label{thmf}
Let $\mathbb{T}=\begin{pmatrix}
P &Q\\
R &S
\end{pmatrix}$ be such that $P, Q, R, S\in \mathcal{B}_{A}(\mathcal{H})$. Then,
\begin{align*}
\omega_{\mathbb{A}}(\mathbb{T})
&\leq \frac{1}{2}\Big(\omega_A(P)+\omega_A(Q)\Big)+\frac{1}{4}\Big(\|I+PP^{\sharp_A}+QQ^{\sharp_A}\|_A+\|I+RR^{\sharp_A}+SS^{\sharp_A}\|_A\Big).
\end{align*}
\end{theorem}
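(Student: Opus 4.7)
The natural strategy is a row-wise split of $\mathbb{T}$. I would write $\mathbb{T}=\mathbb{T}_1+\mathbb{T}_2$ with
\[
\mathbb{T}_1=\begin{pmatrix}P&Q\\0&0\end{pmatrix},\qquad \mathbb{T}_2=\begin{pmatrix}0&0\\R&S\end{pmatrix},
\]
and apply subadditivity $\omega_\mathbb{A}(\mathbb{T})\le\omega_\mathbb{A}(\mathbb{T}_1)+\omega_\mathbb{A}(\mathbb{T}_2)$. Both estimates rest on a semi-Hilbertian Kittaneh-type inequality: for every $X\in\mathcal{B}_A(\mathcal{H})$, using $\langle Xx,x\rangle_A=\langle x,X^{\sharp_A}x\rangle_A$, Cauchy--Schwarz, and AM--GM gives
\[
|\langle Xx,x\rangle_A|\le\tfrac12\bigl(\|x\|_A^2+\|X^{\sharp_A}x\|_A^2\bigr)=\tfrac12\langle(I+XX^{\sharp_A})x,x\rangle_A,
\]
hence $\omega_A(X)\le\tfrac12\|I+XX^{\sharp_A}\|_A$. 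Averaging this against the trivial identity $\omega_A(X)=\omega_A(X)$ yields the balanced form
\[
\omega_A(X)\le\tfrac12\omega_A(X)+\tfrac14\|I+XX^{\sharp_A}\|_A,
\]
which I plan to apply to $X=\mathbb{T}_i$ inside $\mathcal{B}_\mathbb{A}(\mathcal{H}\oplus\mathcal{H})$.

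The block computations are clean. Using \eqref{diez2} I obtain $\mathbb{T}_1\mathbb{T}_1^{\sharp_\mathbb{A}}=\operatorname{diag}(PP^{\sharp_A}+QQ^{\sharp_A},0)$ and, by Lemma \ref{max}, $\|I+\mathbb{T}_1\mathbb{T}_1^{\sharp_\mathbb{A}}\|_\mathbb{A}=\|I+PP^{\sharp_A}+QQ^{\sharp_A}\|_A$; the analogous computation works for $\mathbb{T}_2$. To control the residual $\tfrac12\omega_\mathbb{A}(\mathbb{T}_1)$ that survives the balanced form, I further split
\[
\mathbb{T}_1=\begin{pmatrix}P&0\\0&0\end{pmatrix}+\begin{pmatrix}0&Q\\0&0\end{pmatrix}.
\]
Lemma \ref{lem01}(a) handles the diagonal piece as $\omega_A(P)$, while the off-diagonal piece is $\mathbb{A}$-nilpotent of order two (its $\mathbb{A}$-square vanishes), so by \eqref{at2} its $\mathbb{A}$-numerical radius equals $\tfrac12\|Q\|_A\le\omega_A(Q)$ thanks to \eqref{refine1}. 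Combining yields $\omega_\mathbb{A}(\mathbb{T}_1)\le\omega_A(P)+\omega_A(Q)$, and inserting this into the balanced form gives the first-block contribution
\[
\omega_\mathbb{A}(\mathbb{T}_1)\le\tfrac12\bigl(\omega_A(P)+\omega_A(Q)\bigr)+\tfrac14\|I+PP^{\sharp_A}+QQ^{\sharp_A}\|_A.
\]

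A parallel treatment of $\mathbb{T}_2$ using the same balanced form should deliver the matching $\tfrac14\|I+RR^{\sharp_A}+SS^{\sharp_A}\|_A$ contribution from the second block, after which summing and invoking subadditivity finishes the argument. The hard part is precisely this second block: pushing the coefficient of $\|I+RR^{\sharp_A}+SS^{\sharp_A}\|_A$ from the $\tfrac12$ that the mixed Schwarz inequality gives directly down to $\tfrac14$, \emph{without} letting $\omega_A(R),\omega_A(S)$ appear in the final bound. I expect the factor-two gain to come either from an additional averaging via the $\mathbb{A}$-unitary invariance used in the proof of Lemma \ref{lem01}(c) (symmetrizing contributions across the two rows), or from absorbing the residual $\tfrac12\omega_\mathbb{A}(\mathbb{T}_2)$ back into the norm term through the mixed Schwarz bound rather than through a further subadditivity split. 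Getting those coefficients to close up cleanly is the decisive step of the proof.
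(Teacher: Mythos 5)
Your first-block estimate is correct as far as it goes, but the argument does not close, and the obstruction you flag at the end is real: what you are missing is not a rebalancing of coefficients but a different mechanism entirely. The paper's proof of the single-row bound \eqref{f01} does not use the mixed Schwarz inequality. It uses \eqref{zm} to reduce to bounding $\|\Re_\mathbb{A}(e^{i\theta}\mathbb{S})\|_\mathbb{A}$, identifies this norm with the $\mathbb{A}$-spectral radius via \eqref{aself1}, factors
\[
\begin{pmatrix} e^{i\theta}P+e^{-i\theta}P^{\sharp_A} & e^{i\theta}Q\\ e^{-i\theta}Q^{\sharp_A} & 0\end{pmatrix}
=\begin{pmatrix} P^{\sharp_A} & e^{i\theta}I\\ Q^{\sharp_A} & 0\end{pmatrix}\begin{pmatrix} e^{-i\theta}I & 0\\ P & Q\end{pmatrix},
\]
and invokes the commutation property \eqref{commut} to replace this product by the reversed one, $\begin{pmatrix} e^{-i\theta}P^{\sharp_A} & I\\ PP^{\sharp_A}+QQ^{\sharp_A} & e^{i\theta}P\end{pmatrix}$. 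Bounding $r_\mathbb{A}$ by $\omega_\mathbb{A}$ and splitting into diagonal and antidiagonal parts (Lemma \ref{lem01}(a) and Lemma \ref{lpos}) gives
\[
\omega_\mathbb{A}\left[\begin{pmatrix} P&Q\\0&0\end{pmatrix}\right]\le \tfrac12\,\omega_A(P)+\tfrac14\|I+PP^{\sharp_A}+QQ^{\sharp_A}\|_A,
\]
with no $\omega_A(Q)$ term whatsoever; applied to the flipped second row it yields $\tfrac12\,\omega_A(S)+\tfrac14\|I+RR^{\sharp_A}+SS^{\sharp_A}\|_A$, and summing gives the theorem. Your route necessarily leaves a residual $\tfrac12\,\omega_\mathbb{A}(\mathbb{T}_i)$ in each block, and bounding those by the diagonal/antidiagonal split produces an extra $\tfrac12\big(\omega_A(R)+\omega_A(S)\big)$ (on top of $\tfrac12\big(\omega_A(P)+\omega_A(Q)\big)$), i.e.\ a strictly weaker inequality.

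Two further points. First, the goal you set for the second block --- coefficient $\tfrac14$ on $\|I+RR^{\sharp_A}+SS^{\sharp_A}\|_A$ with no $\omega_A(R),\omega_A(S)$ at all --- is unattainable: with $A=I$, $P=Q=R=0$ and $S=2I$ one has $\omega_{\mathbb{A}}(\mathbb{T})=2$ while $\tfrac14\big(\|I\|+\|I+SS^{*}\|\big)=\tfrac32$, so a numerical-radius contribution from the second row is indispensable. The same example shows the theorem is false as literally printed; the $\omega_A(Q)$ in the displayed bound is a typo for $\omega_A(S)$, which is what the paper's argument actually delivers. Your effort to reproduce the printed $\omega_A(Q)$ while suppressing all second-row numerical radii is therefore chasing an artifact, and neither additional unitary averaging nor absorbing the residual into the norm term can succeed.
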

\begin{proof}
We first prove that
\begin{equation}\label{f01}
\omega_{\mathbb{A}}(\mathbb{S})\leq \tfrac{1}{2}\omega_A(P)+\tfrac{1}{4}\|I+PP^{\sharp_A}+QQ^{\sharp_A}\|_A,
\end{equation}
where $\mathbb{S}=\begin{pmatrix}
P &Q\\
0 &0
\end{pmatrix}$. Let $\theta\in \mathbb{R}$. It is not difficult to verify that $\Re_\mathbb{A}(e^{i\theta}\mathbb{S})$ is an $\mathbb{A}$-self-adjoint operator. So, by \eqref{aself1} we have
$$r_{\mathbb{A}}\left(\Re_\mathbb{A}(e^{i\theta}\mathbb{S})\right)=\|\Re_\mathbb{A}(e^{i\theta}\mathbb{S})\|_{\mathbb{A}}.$$
Now, by using \eqref{diez2}, we see that
\begin{align*}
r_{\mathbb{A}}\left[\Re_A(e^{i\theta}\mathbb{S})\right]
& =\tfrac{1}{2}r_{\mathbb{A}}(e^{i\theta}\mathbb{S}+e^{-i\theta}\mathbb{S}^{\sharp_{\mathbb{A}}})\\
 &=\tfrac{1}{2}r_{\mathbb{A}}\left[e^{i\theta}\begin{pmatrix}
P &Q\\
0 &0
\end{pmatrix}+e^{-i\theta}\begin{pmatrix}
P^{\sharp_{\mathbb{A}}} &0\\
Q^{\sharp_{\mathbb{A}}} &0
\end{pmatrix}\right]\\
&=\tfrac{1}{2}r_{\mathbb{A}}\left[\begin{pmatrix}
e^{i\theta}P+e^{-i\theta}P^{\sharp_{A}} &e^{i\theta}Q\\
e^{-i\theta}Q^{\sharp_{A}} &0
\end{pmatrix}\right]\\
&=\tfrac{1}{2}r_{\mathbb{A}}\left[
\begin{pmatrix}
P^{\sharp_{A}} &e^{i\theta}I\\
Q^{\sharp_{A}} &0
\end{pmatrix}
\begin{pmatrix}
e^{-i\theta}I &0\\
P &Q
\end{pmatrix}
\right]\\
&=\tfrac{1}{2}r_{\mathbb{A}}\left[
\begin{pmatrix}
e^{-i\theta}I &0\\
P &Q
\end{pmatrix}
\begin{pmatrix}
P^{\sharp_{A}} &e^{i\theta}I\\
Q^{\sharp_{A}} &0
\end{pmatrix}
\right]\quad (\text{by }\; \eqref{commut})\\
&=\tfrac{1}{2}r_{\mathbb{A}}\left[
\begin{pmatrix}
e^{-i\theta}P^{\sharp_{A}} &I\\
PP^{\sharp_{A}}+QQ^{\sharp_{A}} &e^{i\theta}P
\end{pmatrix}
\right]\\
&\leq\tfrac{1}{2}\omega_{\mathbb{A}}\left[
\begin{pmatrix}
e^{-i\theta}P^{\sharp_{A}} &I\\
PP^{\sharp_{A}}+QQ^{\sharp_{A}} &e^{i\theta}P
\end{pmatrix}
\right]\\
&\leq\tfrac{1}{2}\omega_{\mathbb{A}}\left[
\begin{pmatrix}
e^{-i\theta}P^{\sharp_{A}} &0\\
0 &e^{i\theta}P
\end{pmatrix}
\right]+\tfrac{1}{2}\omega_{\mathbb{A}}\left[
\begin{pmatrix}
0 &I\\
PP^{\sharp_{A}}+QQ^{\sharp_{A}} &0
\end{pmatrix}
\right]\\
&=\omega_A(P)+\tfrac{1}{2}\|I+PP^{\sharp_{A}}+QQ^{\sharp_{A}}\|_A,\;(\text{by Lemmas } \ref{lem01} \text{ and }\ref{lpos}).
\end{align*}
Hence,
$$
\|\Re_\mathbb{A}(e^{i\theta}\mathbb{S})\|_{\mathbb{A}}\leq\omega_A(P)+\tfrac{1}{2}\|I+PP^{\sharp_{A}}+QQ^{\sharp_{A}}\|_A.$$
So, by taking the supremum over all $\theta\in \mathbb{R}$ and then applying \eqref{zm} we obtain \eqref{f01} as required. Let $\mathbb{U}=\begin{pmatrix}
0&I \\
I&0
\end{pmatrix}.$ In view of \eqref{diez2} we have $\mathbb{U}^{\sharp_{\mathbb{A}}}=\begin{pmatrix}
0&P_{\overline{\mathcal{R}(A)}}\\
P_{\overline{\mathcal{R}(A)}}&0
\end{pmatrix}.$ Further, it can be seen that $\mathbb{U}$ is $\mathbb{A}$-unitary operator. So, by using \eqref{weak} together with \eqref{f01} we get
\begin{align*}
\omega_{\mathbb{A}}(\mathbb{T})
& \leq\omega_{\mathbb{A}}\left[\begin{pmatrix}
P &Q\\
0 &0
\end{pmatrix}\right]+\omega_{\mathbb{A}}\left[\begin{pmatrix}
0 &0\\
R &S
\end{pmatrix}\right] \\
 &=\omega_{\mathbb{A}}\left[\begin{pmatrix}
P &Q\\
0 &0
\end{pmatrix}\right]+\omega_{\mathbb{A}}\left[\mathbb{U}^{\sharp_{\mathbb{A}}}\begin{pmatrix}
0 &0\\
R &S
\end{pmatrix}\mathbb{U}\right] \\
 &=\omega_{\mathbb{A}}\left[\begin{pmatrix}
P &Q\\
0 &0
\end{pmatrix}\right]+\omega_{\mathbb{A}}\left[\begin{pmatrix}
P_{\overline{\mathcal{R}(A)}} &0\\
0 &P_{\overline{\mathcal{R}(A)}}
\end{pmatrix}
\begin{pmatrix}
S &R\\
0 &0
\end{pmatrix}\right]\\
 &=\omega_{\mathbb{A}}\left[\begin{pmatrix}
P &Q\\
0 &0
\end{pmatrix}\right]+\omega_{\mathbb{A}}\left[
\begin{pmatrix}
S &R\\
0 &0
\end{pmatrix}\right]\\
 &\leq\tfrac{1}{2}\left(\omega_A(P)+\omega_A(S)+\|PP^{\sharp_A}+QQ^{\sharp_A}\|_A^{1/2}+\|RR^{\sharp_A}+SS^{\sharp_A}\|_A^{1/2}\right).
\end{align*}
This finishes the proof of the theorem.
\end{proof}

The following lemma is useful in proving our next result.
\begin{lemma}\label{lm5}(\cite{feki02})
Let $\mathbb{T}=\begin{pmatrix}
T_{11}&T_{12} \\
T_{21}&T_{22}
\end{pmatrix}$ be such that $T_{ij}\in \mathcal{B}_{A^{1/2}}(\mathcal{H})$ for all $i,j\in\{1,2\}$. Then, $\mathbb{T}\in \mathcal{B}_{\mathbb{A}^{1/2}}(\mathcal{H}\oplus \mathcal{H})$ and
$$ r_\mathbb{A}\left(\mathbb{T}\right)\leq r\left[\begin{pmatrix}
\|T_{11}\|_A & \|T_{12}\|_A \\
\|T_{21}\|_A & \|T_{22}\|_A
\end{pmatrix}\right].$$
\end{lemma}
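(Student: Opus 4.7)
The plan is to establish both claims simultaneously via a single building-block estimate: for every operator matrix $\mathbb{S}=(S_{ij})_{2\times 2}$ with $S_{ij}\in \mathcal{B}_{A^{1/2}}(\mathcal{H})$,
\begin{equation*}
\|\mathbb{S}\|_{\mathbb{A}}\;\leq\;\left\|\begin{pmatrix} \|S_{11}\|_A & \|S_{12}\|_A \\ \|S_{21}\|_A & \|S_{22}\|_A \end{pmatrix}\right\|_{\mathcal{B}(\mathbb{C}^2)}.
\end{equation*}
To prove this, fix $x=(x_1,x_2)\in \mathcal{H}\oplus \mathcal{H}$ with $\|x\|_{\mathbb{A}}^{2}=\|x_1\|_A^{2}+\|x_2\|_A^{2}$. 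Applying the triangle inequality for $\|\cdot\|_A$ row-by-row to $\mathbb{S}x$, followed by the defining bound $\|S_{ij}y\|_A\leq \|S_{ij}\|_A\|y\|_A$, gives
\begin{equation*}
\|\mathbb{S}x\|_{\mathbb{A}}^{2}\;\leq\;\sum_{i=1}^{2}\Bigl(\sum_{j=1}^{2}\|S_{ij}\|_A\,\|x_j\|_A\Bigr)^{2}\;=\;\|M_S\,\xi\|_{\ell^{2}}^{2}\;\leq\;\|M_S\|_{\mathcal{B}(\mathbb{C}^2)}^{2}\,\|x\|_{\mathbb{A}}^{2},
\end{equation*}
where $\xi=(\|x_1\|_A,\|x_2\|_A)^{\top}$ and $M_S$ is the scalar matrix appearing on the right. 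Specializing to $\mathbb{S}=\mathbb{T}$ yields both $\mathbb{T}\in \mathcal{B}_{\mathbb{A}^{1/2}}(\mathcal{H}\oplus \mathcal{H})$ and the finiteness of the relevant norms.

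Next I would iterate this along powers of $\mathbb{T}$. Denote $\mathbb{T}^n=(T_{ij}^{(n)})_{1\leq i,j\leq 2}$. The block-multiplication identity $T_{ij}^{(n+1)}=\sum_{k}T_{ik}^{(n)}T_{kj}$, together with the submultiplicativity $\|UV\|_A\leq \|U\|_A\|V\|_A$ on $\mathcal{B}_{A^{1/2}}(\mathcal{H})$ (immediate from \eqref{semii}), leads by an easy induction on $n$ to the entrywise matrix majorization
\begin{equation*}
M_n:=\bigl(\|T_{ij}^{(n)}\|_A\bigr)_{1\leq i,j\leq 2}\;\leq\;M^{n}\quad\text{(componentwise),}
\end{equation*}
where $M$ is the matrix appearing in the statement. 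Since $0\leq M_n\leq M^n$ componentwise among nonnegative scalar matrices forces $\|M_n\,\eta\|\leq \|M^n\,|\eta|\,\|\leq\|M^n\|\,\|\eta\|$ for every $\eta\in\mathbb{C}^{2}$, the building-block estimate applied to $\mathbb{S}=\mathbb{T}^{n}$ delivers
\begin{equation*}
\|\mathbb{T}^{n}\|_{\mathbb{A}}\;\leq\;\|M_n\|_{\mathcal{B}(\mathbb{C}^2)}\;\leq\;\|M^{n}\|_{\mathcal{B}(\mathbb{C}^2)}.
\end{equation*}

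Finally, I would take $n$-th roots and let $n\to\infty$: the left-hand side tends to $r_{\mathbb{A}}(\mathbb{T})$ by \eqref{newrad}, while the right-hand side tends to the classical spectral radius $r(M)$ by Gelfand's formula in $\mathcal{B}(\mathbb{C}^2)$. This yields the stated inequality. The main technical obstacle is the entrywise induction $M_n\leq M^n$: conceptually straightforward, it crucially depends on the submultiplicativity of $\|\cdot\|_A$, which is exactly the point where the hypothesis $T_{ij}\in \mathcal{B}_{A^{1/2}}(\mathcal{H})$ is indispensable, for without it the semi-norms could be infinite and the block-multiplication bound would collapse.
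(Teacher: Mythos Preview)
The paper does not supply its own proof of this lemma; it is quoted verbatim from \cite{feki02} and used as a black box. So there is nothing to compare against here.

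That said, your argument is correct and is the natural one for this type of statement. The building-block estimate $\|\mathbb{S}\|_{\mathbb{A}}\leq \|M_S\|$ is exactly right (the key point, that $\|S_{ij}y\|_A\leq\|S_{ij}\|_A\|y\|_A$ holds for \emph{all} $y\in\mathcal{H}$, including those with $\|y\|_A=0$, is precisely the content of $S_{ij}\in\mathcal{B}_{A^{1/2}}(\mathcal{H})$). The entrywise induction $M_n\leq M^n$ via submultiplicativity of $\|\cdot\|_A$ on $\mathcal{B}_{A^{1/2}}(\mathcal{H})$ is clean, and the passage $\|M_n\|\leq\|M^n\|$ for entrywise-ordered nonnegative matrices is the standard monotonicity of the operator norm on the cone of nonnegative matrices. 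Taking $n$-th roots and invoking \eqref{newrad} on one side and Gelfand's formula on the other finishes the job. Nothing is missing.
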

\begin{theorem}
Let $\mathbb{T}=\begin{pmatrix}
P &Q\\
R &S
\end{pmatrix}$ be such that $P, Q, R, S\in \mathcal{B}_{A}(\mathcal{H})$. Then,
\begin{equation}
\omega_{\mathbb{A}}(\mathbb{T})\leq \frac{1}{2}\left(\|P\|_A+\|S\|_A+\|PP^{\sharp_A}+QQ^{\sharp_A}\|_A^{1/2}+\|RR^{\sharp_A}+SS^{\sharp_A}\|_A^{1/2}\right).
\end{equation}
\end{theorem}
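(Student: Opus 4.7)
The plan is to follow the template of the proof of \Cref{thmf}, but to refine the key intermediate inequality by exploiting \Cref{lm5} directly on the $2\times 2$ block that arises, rather than splitting that block into diagonal plus off-diagonal parts. The outer decomposition is identical: write
\begin{equation*}
\mathbb{T}=\begin{pmatrix}P&Q\\0&0\end{pmatrix}+\begin{pmatrix}0&0\\R&S\end{pmatrix},
\end{equation*}
use the triangle inequality for $\omega_{\mathbb{A}}(\cdot)$, and convert the second summand into an upper-triangular block by conjugation with the $\mathbb{A}$-unitary $\mathbb{U}=\begin{pmatrix}0&I\\I&0\end{pmatrix}$ via \eqref{weak}, exactly as at the end of the proof of \Cref{thmf}. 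Thus everything reduces to establishing the single inequality
\begin{equation}\label{goal}
\omega_{\mathbb{A}}\!\left[\begin{pmatrix}P&Q\\0&0\end{pmatrix}\right]\leq \tfrac{1}{2}\bigl(\|P\|_A+\|PP^{\sharp_A}+QQ^{\sharp_A}\|_A^{1/2}\bigr).
\end{equation}

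To prove \eqref{goal}, I would set $\mathbb{S}=\begin{pmatrix}P&Q\\0&0\end{pmatrix}$, fix $\theta\in\mathbb{R}$, and note that $\Re_{\mathbb{A}}(e^{i\theta}\mathbb{S})$ is $\mathbb{A}$-selfadjoint, so by \eqref{aself1} its $\mathbb{A}$-seminorm equals its $\mathbb{A}$-spectral radius. Following the factorisation already used in \Cref{thmf}, I would write
\begin{equation*}
2\,\Re_{\mathbb{A}}(e^{i\theta}\mathbb{S})
=\begin{pmatrix}P^{\sharp_A}&e^{i\theta}I\\Q^{\sharp_A}&0\end{pmatrix}\begin{pmatrix}e^{-i\theta}I&0\\P&Q\end{pmatrix}
\end{equation*}
and apply the commutativity property \eqref{commut} of $r_{\mathbb{A}}$ to swap the factors, obtaining
\begin{equation*}
r_{\mathbb{A}}\!\left[\Re_{\mathbb{A}}(e^{i\theta}\mathbb{S})\right]=\tfrac{1}{2}\,r_{\mathbb{A}}\!\left[\begin{pmatrix}e^{-i\theta}P^{\sharp_A}&I\\PP^{\sharp_A}+QQ^{\sharp_A}&e^{i\theta}P\end{pmatrix}\right].
\end{equation*}

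The decisive step, and the main point where the argument diverges from \Cref{thmf}, is to feed this last operator matrix directly into \Cref{lm5} rather than decomposing it further. Using $\|e^{\pm i\theta}P^{\sharp_A}\|_A=\|P\|_A$ (from \eqref{diez}) and $\|I\|_A=1$, \Cref{lm5} gives
\begin{equation*}
r_{\mathbb{A}}\!\left[\begin{pmatrix}e^{-i\theta}P^{\sharp_A}&I\\PP^{\sharp_A}+QQ^{\sharp_A}&e^{i\theta}P\end{pmatrix}\right]\leq r\!\left[\begin{pmatrix}\|P\|_A&1\\\|PP^{\sharp_A}+QQ^{\sharp_A}\|_A&\|P\|_A\end{pmatrix}\right],
\end{equation*}
and an elementary eigenvalue computation for this $2\times 2$ nonnegative scalar matrix (both diagonal entries equal $\|P\|_A$) yields spectral radius $\|P\|_A+\|PP^{\sharp_A}+QQ^{\sharp_A}\|_A^{1/2}$. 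Combining, taking the supremum over $\theta\in\mathbb{R}$, and invoking \eqref{zm} produces \eqref{goal}. The analogous estimate for $\begin{pmatrix}0&0\\R&S\end{pmatrix}$ (via $\mathbb{A}$-unitary conjugation by $\mathbb{U}$) adds symmetrically, giving the claimed bound.

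The only subtle point, and the step I expect to be the main obstacle when writing up, is verifying that the hypothesis $P,Q,R,S\in\mathcal{B}_{A}(\mathcal{H})$ is genuinely needed at the right places: the manipulations with $\sharp_A$-adjoints and the block factorisation require the strict $A$-adjoint algebra (as in \eqref{diez2}), while the commutativity identity \eqref{commut} and \Cref{lm5} only need $A^{1/2}$-boundedness, which is automatic. Apart from this bookkeeping, the argument is a direct refinement of \Cref{thmf}, the improvement coming entirely from the sharper use of \Cref{lm5} in place of the subadditivity splitting.
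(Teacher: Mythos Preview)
Your proposal is correct and follows essentially the same route as the paper's own proof: the paper also reduces to the upper-triangular block $\mathbb{S}=\begin{pmatrix}P&Q\\0&0\end{pmatrix}$, uses the identical factorisation and the commutativity \eqref{commut} to reach the matrix $\begin{pmatrix}e^{-i\theta}P^{\sharp_A}&I\\PP^{\sharp_A}+QQ^{\sharp_A}&e^{i\theta}P\end{pmatrix}$, and then applies \Cref{lm5} directly to obtain the scalar $2\times 2$ spectral radius $\|P\|_A+\|PP^{\sharp_A}+QQ^{\sharp_A}\|_A^{1/2}$. Your remark that this is precisely where the argument departs from \Cref{thmf} is exactly the point of the paper's proof as well.
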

\begin{proof}
We first prove that
\begin{equation}
\omega_{\mathbb{A}}(\mathbb{S})\leq \frac{1}{2}\left(\|P\|_A+\|PP^{\sharp_A}+QQ^{\sharp_A}\|_A^{1/2}\right),
\end{equation}
where $\mathbb{S}=\begin{pmatrix}
P &Q\\
0 &0
\end{pmatrix}$. Let $\theta\in \mathbb{R}$. By proceeding as in the proof of Theorem \ref{thmf} we see that
\begin{align*}
\|\Re_\mathbb{A}(e^{i\theta}\mathbb{S})\|_{\mathbb{A}}
&=r_{\mathbb{A}}\left[\Re_\mathbb{A}(e^{i\theta}\mathbb{S})\right]\\
&=\tfrac{1}{2}r_{\mathbb{A}}\left[
\begin{pmatrix}
e^{-i\theta}P^{\sharp_{A}} &I\\
PP^{\sharp_{A}}+QQ^{\sharp_{A}} &e^{i\theta}P
\end{pmatrix}
\right]\\
&\leq\frac{1}{2}r\left[
\begin{pmatrix}
\|P\|_A &1\\
\|PP^{\sharp_{A}}+QQ^{\sharp_{A}}\|_A &\|P\|_A
\end{pmatrix}
\right]\\
&=\tfrac{1}{2}\left(\|P\|_A+\|PP^{\sharp_{A}}+QQ^{\sharp_{A}}\|_A^{1/2} \right).
\end{align*}
Using an argument similar to that used in proof of Theorem \ref{thmf}, we get the desired result.
\end{proof}

Before proving our next theorem we have to state the following lemma.
\begin{lemma}(\cite[Theorem 5.1]{bakfeki04})\label{pos2n}
Let $T \in \mathcal{B}(\mathcal{H})$ be an $A$-selfadjoint operator. Then, for any positive integer $n$ we have
\begin{equation*}
\|T^n\|_A=\|T\|_A^n.
\end{equation*}
\end{lemma}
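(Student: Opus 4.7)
The plan is to combine two ingredients already available in the paper: the equality $\|S\|_A = r_A(S)$ for $A$-selfadjoint $S$ given by \eqref{aself1}, and the power-compatibility of the $A$-spectral radius, which is an easy consequence of the limit formula \eqref{newrad}. The crucial observation that lets these two facts be chained together is that $A$-selfadjointness is preserved under taking positive integer powers, so that \eqref{aself1} can be applied not only to $T$ but also to each $T^n$.

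I would first verify by induction that $T^n$ is $A$-selfadjoint whenever $T$ is: starting from $AT = T^{*}A$, one computes
$$AT^{n+1} = (AT)T^{n} = T^{*}AT^{n} = T^{*}(T^{n})^{*}A = (T^{n+1})^{*}A.$$
In particular $T^{n}\in \mathcal{B}_{A}(\mathcal{H})\subset \mathcal{B}_{A^{1/2}}(\mathcal{H})$, so \eqref{aself1} applied to $T^{n}$ yields $\|T^{n}\|_A = r_A(T^{n})$. Next I would establish $r_A(T^{n}) = r_A(T)^{n}$ directly from the limit formula \eqref{newrad}: for any $S \in \mathcal{B}_{A^{1/2}}(\mathcal{H})$,
$$r_A(S^{n}) = \lim_{k\to\infty}\|S^{nk}\|_A^{1/k} = \Bigl(\lim_{k\to\infty}\|S^{nk}\|_A^{1/(nk)}\Bigr)^{n} = r_A(S)^{n},$$
using that $\bigl(\|S^{nk}\|_A^{1/(nk)}\bigr)_{k}$ is a subsequence of the convergent sequence $\bigl(\|S^{m}\|_A^{1/m}\bigr)_{m}$ with the same limit. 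A second application of \eqref{aself1}, this time to $T$ itself, then gives
$$\|T^{n}\|_A = r_A(T^{n}) = r_A(T)^{n} = \|T\|_A^{n},$$
which is the desired identity.

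There is no real obstacle here: the argument is essentially a repackaging of results proved earlier in the excerpt. The only point that deserves a moment of care is the stability of $A$-selfadjointness under powers, without which \eqref{aself1} could not be invoked for $T^n$; this is handled by the short inductive computation above. An alternative route, for completeness, would pass through the tilde construction of \eqref{tilde}: $A$-selfadjointness of $T$ translates via $Z_A T = \widetilde{T}Z_A$ and \eqref{I.2} into self-adjointness of $\widetilde{T}$ on $\mathbf{R}(A^{1/2})$, and one concludes from the classical identity $\|\widetilde{T}^{n}\| = \|\widetilde{T}\|^{n}$ combined with $\widetilde{T^n} = \widetilde{T}^n$.
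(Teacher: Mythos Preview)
The paper does not supply its own proof of this lemma; it is quoted from \cite[Theorem 5.1]{bakfeki04} without argument, so there is no in-paper proof to compare against. Your proposal is correct. The inductive verification that $A$-selfadjointness passes to powers is the only point that needs checking, and your computation $AT^{n+1}=T^{*}(AT^{n})=(T^{n+1})^{*}A$ handles it cleanly; once that is in place, \eqref{aself1} applied to both $T$ and $T^{n}$, together with the subsequence argument giving $r_A(T^{n})=r_A(T)^{n}$ from \eqref{newrad}, closes the loop. The alternative route through the lift $\widetilde{T}$ and \eqref{tilde} that you sketch is also valid and is in fact closer in spirit to how such identities are often proved in the semi-Hilbertian literature.
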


 \begin{theorem}\label{theorem:upper 2}
Let $\mathbb{T}=\begin{pmatrix}
P &Q\\
R &S
\end{pmatrix}$ be such that $P, Q, R, S\in \mathcal{B}_{A}(\mathcal{H})$. Then,
 \[\omega_\mathbb{A}(\mathbb{T})\leq \sqrt{\omega_A^2(P)+\frac{1}{2}\|Q\|_A\left(\omega_A(P)+\frac{1}{2}\|Q\|_A\right)}+\sqrt{\omega_A^2(S)+\frac{1}{2}\|R\|_A\left(\omega_A(S)+\frac{1}{2}\|R\|_A\right)}.\]
\end{theorem}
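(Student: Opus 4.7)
The plan is to parallel the proof of Theorem~\ref{thmf}. I would split
$$\mathbb{T}=\begin{pmatrix}P&Q\\0&0\end{pmatrix}+\begin{pmatrix}0&0\\R&S\end{pmatrix}$$
and, using the $\mathbb{A}$-unitary $\mathbb{U}=\begin{pmatrix}0&I\\I&0\end{pmatrix}$ together with the weak $\mathbb{A}$-unitary invariance \eqref{weak}, show that $\omega_\mathbb{A}\!\left[\begin{pmatrix}0&0\\R&S\end{pmatrix}\right]=\omega_\mathbb{A}\!\left[\begin{pmatrix}S&R\\0&0\end{pmatrix}\right]$. The subadditivity of $\omega_\mathbb{A}(\cdot)$ then reduces the theorem to establishing the single-block estimate
$$\omega_\mathbb{A}\!\left[\begin{pmatrix}X&Y\\0&0\end{pmatrix}\right]\leq\sqrt{\omega_A^2(X)+\tfrac{1}{2}\|Y\|_A\bigl(\omega_A(X)+\tfrac{1}{2}\|Y\|_A\bigr)}$$
for arbitrary $X,Y\in\mathcal{B}_A(\mathcal{H})$, to be applied with $(X,Y)=(P,Q)$ and with $(X,Y)=(S,R)$.

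For that single-block estimate I would argue directly from the variational definition of $\omega_\mathbb{A}$. Setting $\mathbb{S}_0=\begin{pmatrix}X&Y\\0&0\end{pmatrix}$, take $z=(x,y)\in\mathcal{H}\oplus\mathcal{H}$ with $\|x\|_A^2+\|y\|_A^2=1$ and expand
$$\bigl|\langle\mathbb{S}_0 z,z\rangle_{\mathbb{A}}\bigr|=|\langle Xx,x\rangle_A+\langle Yy,x\rangle_A|\leq\omega_A(X)\,\|x\|_A^2+\|Y\|_A\,\|x\|_A\,\|y\|_A,$$
where the first term uses the definition of the $A$-numerical radius of $X$ and the second the Cauchy--Schwarz inequality for $\langle\cdot,\cdot\rangle_A$ together with $\|Yy\|_A\leq\|Y\|_A\,\|y\|_A$.

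Writing $s=\|x\|_A^2\in[0,1]$, matters reduce to bounding the real-variable function $g(s)=\omega_A(X)\,s+\|Y\|_A\sqrt{s(1-s)}$ from above. I would rewrite $g(s)=\tfrac{1}{2}\omega_A(X)+\tfrac{1}{2}\bigl[\omega_A(X)(2s-1)+\|Y\|_A\cdot 2\sqrt{s(1-s)}\bigr]$ and apply the Euclidean Cauchy--Schwarz inequality in $\mathbb{R}^2$ to the pair $\bigl(2s-1,\,2\sqrt{s(1-s)}\bigr)$, which is a unit vector since $(2s-1)^2+4s(1-s)=1$. This yields $g(s)\leq\tfrac{1}{2}\bigl(\omega_A(X)+\sqrt{\omega_A^2(X)+\|Y\|_A^2}\bigr)$. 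Squaring and then invoking the elementary $\sqrt{\omega_A^2(X)+\|Y\|_A^2}\leq\omega_A(X)+\|Y\|_A$ produces exactly $g(s)^2\leq\omega_A^2(X)+\tfrac{1}{2}\|Y\|_A\omega_A(X)+\tfrac{1}{4}\|Y\|_A^2$, which is the square of the target.

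I do not expect any serious obstacle. The analytic core is the one-line Cauchy--Schwarz estimate in $\mathbb{R}^2$, and the subsequent comparison is a short algebraic manipulation. The rest is routine bookkeeping with tools already developed in the paper: the variational formula for $\omega_\mathbb{A}$, its subadditivity, and the $\mathbb{A}$-unitary flip trick borrowed from the proof of Theorem~\ref{thmf}.
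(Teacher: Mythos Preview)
Your proof is correct, and the reduction step (subadditivity plus the $\mathbb{A}$-unitary flip $\mathbb{U}=\begin{pmatrix}0&I\\I&0\end{pmatrix}$) matches the paper exactly. The difference lies entirely in how the single-block estimate for $\mathbb{S}_0=\begin{pmatrix}X&Y\\0&0\end{pmatrix}$ is obtained. The paper works through Zamani's formula \eqref{zm}: it computes $\Re_{\mathbb{A}}(e^{i\theta}\mathbb{S}_0)$, squares it, bounds each of the four resulting block matrices via Lemma~\ref{max}, and then invokes Lemma~\ref{pos2n} ($\|T^2\|_A=\|T\|_A^2$ for $A$-selfadjoint $T$) to pass from the squared operator back to the norm. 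Your route is more elementary: you stay with the variational definition of $\omega_{\mathbb{A}}$, use only Cauchy--Schwarz for the semi-inner product, and finish with a one-variable optimization in $s=\|x\|_A^2$. This bypasses \eqref{zm} and Lemma~\ref{pos2n} altogether, and in fact your intermediate bound $\omega_{\mathbb{A}}(\mathbb{S}_0)\le\tfrac{1}{2}\bigl(\omega_A(X)+\sqrt{\omega_A^2(X)+\|Y\|_A^2}\bigr)$ is strictly sharper than the theorem's stated bound before you weaken it via $\sqrt{\omega_A^2(X)+\|Y\|_A^2}\le\omega_A(X)+\|Y\|_A$. The paper's approach, on the other hand, illustrates the $\Re_{\mathbb{A}}$-machinery that drives several of the neighbouring results.
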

\begin{proof}
Let $\mathbb{S}=\begin{pmatrix}
P&Q\\
0&0
\end{pmatrix}.$ We first prove that
\begin{equation}\label{ff1}
\omega_\mathbb{A}(\mathbb{S})\leq \sqrt{\omega_A^2(P)+\frac{1}{2}\|Q\|_A\left(\omega_A(P)+\frac{1}{2}\|Q\|_A\right)}.
\end{equation}
Let $\theta \in \mathbb{R}$. A straightforward calculation shows that
\begin{align*}
\Re_\mathbb{A}(e^{i\theta}\mathbb{S})
&=\begin{pmatrix}
\Re_A(e^{i\theta}P)&\frac{1}{2}e^{i\theta}Q\\
\frac{1}{2}e^{-i\theta}Q^{\sharp_A}&0
\end{pmatrix}\\
&=\begin{pmatrix}
\Re_A(e^{i\theta}P)&0\\
0&0
\end{pmatrix}+\begin{pmatrix}
0&\frac{1}{2}e^{i\theta}Q\\
\frac{1}{2}e^{-i\theta}Q^{\sharp_A}&0
\end{pmatrix}.
\end{align*}
This implies that
\begin{align*}
\big(\Re_\mathbb{A}(e^{i\theta}\mathbb{S})\big)^2
&=\begin{pmatrix}
[\Re_A(e^{i\theta}P)]^2&0\\
0&0
\end{pmatrix}+\begin{pmatrix}
\frac{1}{4}QQ^{\sharp_A}&0\\
0&\frac{1}{4}Q^{\sharp_A}Q
\end{pmatrix}\\
&+\begin{pmatrix}
0&\frac{1}{2}e^{i\theta} \left[\Re_A (e^{i\theta}P)\right]Q\\
0&0
\end{pmatrix}+\begin{pmatrix}
0&0\\
\frac{1}{2}e^{-i\theta}Q^{\sharp_A} \left[\Re_A (e^{i\theta}P)\right]&0
\end{pmatrix}.
\end{align*}
Thus, by using \eqref{zm} together with Lemma \ref{max} we see that
\begin{align*}
\|\big(\Re_\mathbb{A}(e^{i\theta}\mathbb{S})\big)^2\|_\mathbb{A}
&\leq \|\Re_A(e^{i\theta}P)\|_A^2+\frac{1}{4}\|Q\|_A^2+\frac{1}{2}\|\Re_A(e^{i\theta}P)\|_A \|Q\|_A\\
            &\leq \omega_A^2(P)+\frac{1}{4}\|Q\|_A^2+\frac{1}{2}\omega_A(P) \|Q\|_A.
\end{align*}
Since $\Re_\mathbb{A}(e^{i\theta}\mathbb{S})$ is $\mathbb{A}$-selfadjoint, then an application of Lemma \ref{pos2n} gives
$$
\|\Re_\mathbb{A}(e^{i\theta}\mathbb{S})\|_\mathbb{A}^2\leq \omega_A^2(P)+\frac{1}{4}\|Q\|_A^2+\frac{1}{2}\omega_A(P) \|Q\|_A.$$
Taking the supremum over all $\theta\in \mathbb{R}$ in the above inequality and then using \eqref{zm} yields that
\[\omega_\mathbb{A}^2(\mathbb{S})\leq \omega_A^2(P)+\frac{1}{4}\|Q\|_A^2+\frac{1}{2}\omega_A(P) \|Q\|_A.\]
This proves \eqref{ff1}. Using an argument similar to that used in proof of Theorem \ref{thmf}, we get the desired result.
\end{proof}

Next we state the following useful lemmas related to $A$-selfadjoint operators.
\begin{lemma}\label{lemma:5}
Let $T, S \in \mathcal{B}(\mathcal{H})$ be two $A$-selfadjoint operators. If $T-S\geq_A 0$, then
\begin{equation*}
\|T\|_A \geq \|S\|_A.
 \end{equation*}
\end{lemma}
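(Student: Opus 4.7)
The plan is to deduce the inequality directly from the characterization $\|\cdot\|_A = \omega_A(\cdot)$ for $A$-selfadjoint operators given in \eqref{aself1}, combined with the obvious pointwise inequality $\langle Sx\mid x\rangle_A \leq \langle Tx\mid x\rangle_A$ forced by $T - S \geq_A 0$. Since the analogous Hilbert space statement (norm is monotone on the cone of positive operators) requires both operators to be positive, I read the hypothesis in the natural way $0 \leq_A S \leq_A T$.

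First I would apply \eqref{aself1} to $S$ to get $\|S\|_A = \omega_A(S) = \sup_{\|x\|_A = 1}|\langle Sx\mid x\rangle_A|$. Because $S$ is $A$-selfadjoint, $\langle Sx\mid x\rangle_A$ is real, and because $S \geq_A 0$, it is non-negative, so the absolute value is redundant: $\|S\|_A = \sup_{\|x\|_A = 1}\langle Sx\mid x\rangle_A$. Then the hypothesis $T - S \geq_A 0$ yields $\langle Sx\mid x\rangle_A \leq \langle Tx\mid x\rangle_A$ for every $x$, and taking the supremum gives
\[
\|S\|_A \;\leq\; \sup_{\|x\|_A = 1}\langle Tx\mid x\rangle_A \;\leq\; \sup_{\|x\|_A = 1}|\langle Tx\mid x\rangle_A| \;=\; \omega_A(T) \;=\; \|T\|_A,
\]
the last equality again from \eqref{aself1}.

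A more conceptual alternative is to lift $T$ and $S$ to $\widetilde{T}, \widetilde{S} \in \mathcal{B}(\mathbf{R}(A^{1/2}))$ via Proposition 3.6 of \cite{acg3}. As in the proof of Lemma \ref{lpos}, $A$-selfadjointness passes to selfadjointness on $\mathbf{R}(A^{1/2})$, and the order $\geq_A$ passes to the standard positivity order there (using $\widetilde{T - S} = \widetilde{T} - \widetilde{S}$ together with the density of $\mathcal{R}(A)$ in $\mathbf{R}(A^{1/2})$). The classical norm-monotonicity of positive Hilbert space operators then gives $\|\widetilde{S}\| \leq \|\widetilde{T}\|$, and \eqref{tilde} transports this back to $\|S\|_A \leq \|T\|_A$.

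The one genuinely subtle step is dropping $|\cdot|$ in the expression for $\omega_A(S)$, which needs the $A$-positivity of $S$; without it the hypothesis $T - S \geq_A 0$ only controls the right end of the numerical range of $S$, and the conclusion can fail (e.g.\ $A = I$, $T = 0$, $S = -I$). Once the positivity is recognized, the argument is a two-line supremum comparison.
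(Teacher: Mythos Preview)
Your core argument---the pointwise inequality $\langle Sx\mid x\rangle_A \leq \langle Tx\mid x\rangle_A$ followed by taking suprema over the unit sphere and invoking \eqref{aself1}---is exactly what the paper does. More importantly, you have correctly spotted a genuine gap: the lemma as stated is false (your counterexample $A=I$, $T=0$, $S=-I$ works), and the paper's own proof silently passes from $\sup_{\|x\|_A=1}\langle Sx\mid x\rangle_A$ to $\omega_A(S)=\sup_{\|x\|_A=1}\lvert\langle Sx\mid x\rangle_A\rvert$, a step that is only legitimate when $S\geq_A 0$. With your added hypothesis $0\leq_A S\leq_A T$ the argument is complete, and this is indeed the situation in the paper's main application of the lemma (where $S$ is the square of an $\mathbb{A}$-selfadjoint operator, hence $\mathbb{A}$-positive by Lemma~\ref{posss}). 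Your lifting alternative via $\mathbf{R}(A^{1/2})$ is also correct but unnecessary for what is, once the positivity is in place, a one-line supremum comparison.
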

\begin{proof}
Since $T-S\geq_A 0$, then $\langle (T-S)x\mid x\rangle_A \geq0$ for all $x\in \mathcal{H}$. This gives
\begin{equation*}
\langle Tx\mid x\rangle_A \geq \langle Sx\mid x\rangle_A,\quad \forall\,x\in \mathcal{H}.
 \end{equation*}
So, by taking the supremum over all $x\in \mathcal{H}$ with $\|x\|_A=1$ in the above inequality and then using \eqref{aself1} we obtain the desired result.
\end{proof}
\begin{lemma}(\cite{feki02})\label{posss}
Let $T \in \mathcal{B}_A(\mathcal{H})$ be an $A$-selfadjoint operator. Then, $T^{2n}\geq_A 0$ for any positive integer $n$.
\end{lemma}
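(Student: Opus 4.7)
The plan is to unwind the definition of $A$-selfadjointness and reduce the claim $AT^{2n}\ge 0$ to an elementary positivity argument using that $A\in\mathcal{B}(\mathcal{H})^+$. Recall that by definition $T$ is $A$-selfadjoint precisely when $AT = T^{*}A$ in $\mathcal{B}(\mathcal{H})$, so the proof does not really need the finer structure of $\mathcal{B}_A(\mathcal{H})$ beyond the existence of this intertwining identity.

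First I would establish, by a one-line induction on $k$, the intertwining relation
\begin{equation*}
AT^{k} = (T^{*})^{k}A, \qquad \forall\, k\in\mathbb{N}.
\end{equation*}
The base case $k=1$ is the hypothesis $AT=T^{*}A$, and the inductive step is just
$AT^{k+1}=(AT)T^{k}=T^{*}(AT^{k})=T^{*}(T^{*})^{k}A=(T^{*})^{k+1}A$. This is the key computational step, although it is entirely routine.

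Next I would apply this identity at the exponent $n$ (rather than $2n$) to split the even power symmetrically:
\begin{equation*}
AT^{2n} = (AT^{n})T^{n} = (T^{*})^{n} A\, T^{n} = (T^{n})^{*}\,A\,T^{n}.
\end{equation*}
Since $A\in\mathcal{B}(\mathcal{H})^{+}$, the operator on the right is manifestly positive: for every $x\in\mathcal{H}$,
\begin{equation*}
\langle AT^{2n}x\mid x\rangle = \langle A T^{n}x\mid T^{n}x\rangle \ge 0.
\end{equation*}
This shows $AT^{2n}\ge 0$, i.e.\ $T^{2n}\ge_{A}0$, as required. (Note that $T^{2n}\in \mathcal{B}_A(\mathcal{H})$ automatically because $\mathcal{B}_A(\mathcal{H})$ is a subalgebra.)

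There is no real obstacle here: the only delicate point is to notice that the correct way to split $AT^{2n}$ is not as $AT \cdot T^{2n-1}$, which only yields $A$-selfadjointness of $T^{2n}$, but as $AT^{n}\cdot T^{n}$, which after applying the intertwining $AT^{n}=(T^{n})^{*}A$ exposes the sandwich $(T^{n})^{*}AT^{n}$ whose positivity is immediate from $A\ge 0$. Alternatively, one could pass to the operator $\widetilde T\in\mathcal{B}(\mathbf{R}(A^{1/2}))$ provided by \cite[Proposition 3.6]{acg3}, observe that $A$-selfadjointness of $T$ makes $\widetilde T$ selfadjoint on the Hilbert space $\mathbf{R}(A^{1/2})$, and then use classical functional calculus together with $\widetilde{T^{2n}}=\widetilde T^{\,2n}$ and \eqref{I.2} to conclude; but the direct argument above is shorter and avoids invoking the de Branges--Rovnyak machinery.
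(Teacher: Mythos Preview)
Your argument is correct: the inductive intertwining $AT^{k}=(T^{*})^{k}A$ followed by the symmetric splitting $AT^{2n}=(T^{n})^{*}AT^{n}\ge 0$ is a clean and complete proof. Note, however, that the paper itself does not give a proof of this lemma; it is simply quoted from \cite{feki02}, so there is no in-text argument to compare against. Your direct computation is in any case the natural route and requires nothing beyond the defining relation $AT=T^{*}A$ and $A\ge 0$.
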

We are now in a position to prove the following theorem.
 \begin{theorem}\label{theorem:upper 3}
Let $\mathbb{T}=\begin{pmatrix}
P &Q\\
R &S
\end{pmatrix}$ be such that $P, Q, R, S\in \mathcal{B}_{A}(\mathcal{H})$. Then,
\[\omega_\mathbb{A}(\mathbb{T})\leq \sqrt{2\omega_A^2(P)+\frac{1}{2} \left( \|P^{\sharp_A}Q\|_A+\|Q\|_A^2  \right)}+\sqrt{2\omega_A^2(S)+\frac{1}{2} \left(\|S^{\sharp_A}R\|_A+\|R\|_A^2 \right)}.\]
\end{theorem}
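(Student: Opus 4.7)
The strategy is to mimic the two-step template of Theorems \ref{thmf} and \ref{theorem:upper 2}. First, I would establish the bound for the triangular matrix $\mathbb{S}=\begin{pmatrix} P & Q \\ 0 & 0 \end{pmatrix}$, namely
\[
\omega_\mathbb{A}(\mathbb{S})\leq \sqrt{2\omega_A^2(P)+\tfrac{1}{2}\|P^{\sharp_A}Q\|_A+\tfrac{1}{2}\|Q\|_A^2},
\]
and then bootstrap to the full $\mathbb{T}$. Splitting $\mathbb{T}=\begin{pmatrix} P & Q \\ 0 & 0\end{pmatrix}+\begin{pmatrix} 0 & 0 \\ R & S\end{pmatrix}$ and conjugating the lower summand by the $\mathbb{A}$-unitary $\mathbb{U}=\begin{pmatrix} 0 & I \\ I & 0 \end{pmatrix}$ turns it into $\begin{pmatrix} S & R \\ 0 & 0 \end{pmatrix}$; the combination of \eqref{weak}, the triangular bound applied to $(P,Q)$ and to $(S,R)$, and the subadditivity of $\omega_\mathbb{A}(\cdot)$ then delivers the displayed two-square-root estimate. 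This bootstrap is exactly the argument already carried out after \eqref{f01} in Theorem \ref{thmf}.

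For the triangular estimate, fix $\theta\in\mathbb{R}$. Since $\Re_\mathbb{A}(e^{i\theta}\mathbb{S})$ is $\mathbb{A}$-selfadjoint, Lemma \ref{pos2n} reduces the problem to bounding $\|(\Re_\mathbb{A}(e^{i\theta}\mathbb{S}))^2\|_\mathbb{A}$. I would write $\Re_\mathbb{A}(e^{i\theta}\mathbb{S})=X+Y$ with
\[
X=\begin{pmatrix} \Re_A(e^{i\theta}P) & 0 \\ 0 & 0 \end{pmatrix}, \qquad Y=\begin{pmatrix} 0 & \tfrac{1}{2}e^{i\theta}Q \\ \tfrac{1}{2}e^{-i\theta}Q^{\sharp_A} & 0 \end{pmatrix},
\]
and expand $(X+Y)^2=X^2+Y^2+XY+YX$. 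By Lemma \ref{max}, \eqref{diez}, \eqref{aself1} and Lemma \ref{pos2n}, the two block-diagonal pieces give $\|X^2\|_\mathbb{A}\leq\omega_A^2(P)$ and $\|Y^2\|_\mathbb{A}=\tfrac{1}{4}\|Q\|_A^2$, while the antidiagonal cross term satisfies
\[
\|XY+YX\|_\mathbb{A}=\tfrac{1}{2}\max\bigl\{\|\Re_A(e^{i\theta}P)Q\|_A,\ \|Q^{\sharp_A}\Re_A(e^{i\theta}P)\|_A\bigr\}
\]
again by Lemma \ref{max}.

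The cross term is where $\|P^{\sharp_A}Q\|_A$ enters: expanding $\Re_A(e^{i\theta}P)=\tfrac{1}{2}(e^{i\theta}P+e^{-i\theta}P^{\sharp_A})$ and using the triangle inequality yields
\[
\|\Re_A(e^{i\theta}P)Q\|_A\leq \tfrac{1}{2}\|PQ\|_A+\tfrac{1}{2}\|P^{\sharp_A}Q\|_A,
\]
together with the symmetric estimate for $\|Q^{\sharp_A}\Re_A(e^{i\theta}P)\|_A$ (the $\sharp_A$-invariance of $\|\cdot\|_A$ combined with $(T^{\sharp_A})^{\sharp_A}=P_{\overline{\mathcal{R}(A)}}TP_{\overline{\mathcal{R}(A)}}$ lets one identify $\|Q^{\sharp_A}P\|_A$ with $\|P^{\sharp_A}Q\|_A$ and $\|Q^{\sharp_A}P^{\sharp_A}\|_A$ with $\|PQ\|_A$). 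I would then control the remaining byproduct via $\|PQ\|_A\leq\|P\|_A\|Q\|_A\leq 2\omega_A(P)\|Q\|_A$ using \eqref{refine1}, and fold this through the AM--GM estimate $2\omega_A(P)\|Q\|_A\leq 2\omega_A^2(P)+\tfrac{1}{2}\|Q\|_A^2$. Summing the three contributions and invoking \eqref{zm} after taking the supremum in $\theta$ produces the advertised inequality for $\mathbb{S}$.

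The main technical obstacle is the handling of the cross term: exposing $\|P^{\sharp_A}Q\|_A$ forces an expansion of $\Re_A(e^{i\theta}P)$ that produces a $\|PQ\|_A$ byproduct, whose only natural majorization passes through $\|P\|_A\leq 2\omega_A(P)$ and thus introduces a mixed $\omega_A(P)\|Q\|_A$ term; the AM--GM scaling must be chosen so that this mixed term slots cleanly into $2\omega_A^2(P)+\tfrac{1}{2}\|Q\|_A^2$ without disturbing the clean $\tfrac{1}{2}\|P^{\sharp_A}Q\|_A$ demanded by the statement. Everything else — the identification of the diagonal-block norms, the $\mathbb{A}$-unitary swap, and the final subadditivity step — is routine and parallels the arguments of Theorems \ref{thmf} and \ref{theorem:upper 2}.
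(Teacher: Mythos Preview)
Your bootstrap step and the triangular set-up $\Re_\mathbb{A}(e^{i\theta}\mathbb{S})=X+Y$ are the same as the paper's, but the paper handles the cross term by a genuinely different device. Rather than bounding $\big\|\big(\Re_\mathbb{A}(e^{i\theta}\mathbb{S})\big)^2\big\|_\mathbb{A}$ directly, the paper first invokes Lemmas \ref{lemma:5} and \ref{posss} to pass to
\[
\big\|\Re_\mathbb{A}^2(e^{i\theta}\mathbb{S})\big\|_\mathbb{A}\le\big\|\Re_\mathbb{A}^2(e^{i\theta}\mathbb{S})+\Im_\mathbb{A}^2(e^{i\theta}\mathbb{S})\big\|_\mathbb{A},
\]
and then computes the right-hand side explicitly: the off-diagonal entries of $\Re_\mathbb{A}^2+\Im_\mathbb{A}^2$ collapse (via $\Re_A(e^{i\theta}P)-i\,\Im_A(e^{i\theta}P)=e^{-i\theta}P^{\sharp_A}$) to exactly $\tfrac{1}{2}P^{\sharp_A}Q$ and $\tfrac{1}{2}Q^{\sharp_A}P$, with no $\theta$-dependence and no spurious $PQ$ term. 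This yields the stated bound on the nose, with $2\omega_A^2(P)$ coming from $\|\Re_A^2(e^{i\theta}P)+\Im_A^2(e^{i\theta}P)\|_A$.

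Your route --- expanding $\Re_A(e^{i\theta}P)Q$ into $\tfrac12PQ+\tfrac12P^{\sharp_A}Q$ and absorbing $\|PQ\|_A$ via $\|P\|_A\le 2\omega_A(P)$ and AM--GM --- is also correct, and in fact your intermediate estimate
\[
\omega_\mathbb{A}^2(\mathbb{S})\le \omega_A^2(P)+\tfrac14\|Q\|_A^2+\tfrac14\|PQ\|_A+\tfrac14\|P^{\sharp_A}Q\|_A
\]
is sharper than the stated bound (so the coefficient worry you flag is not an obstacle: you land \emph{below} $2\omega_A^2(P)+\tfrac12\|Q\|_A^2+\tfrac12\|P^{\sharp_A}Q\|_A$, not at it). The trade-off is that the paper's $\Re^2+\Im^2$ trick is cleaner and explains where the constants come from, whereas your argument is more elementary (it avoids Lemmas \ref{lemma:5} and \ref{posss}) and, if you keep the $\|PQ\|_A$ term, actually delivers a tighter inequality than the theorem records.
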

\begin{proof}

We first prove that
\begin{equation}\label{2020f}
\omega_\mathbb{A}\left[\begin{pmatrix}
P &Q\\
0 &0
\end{pmatrix}\right]\leq \sqrt{2\omega_A^2(P)+\frac{1}{2}\left(\|P^{\sharp_A}Q\|_A+\|Q\|_A^2\right)}.
\end{equation}
Let $\theta \in \mathbb{R}$. By using \eqref{diez2}, it can be verified that
$$\Re_\mathbb{A}\left[e^{i\theta}\begin{pmatrix}
P &Q\\
0 &0
\end{pmatrix}\right]=\begin{pmatrix}
\Re_A(e^{i\theta}P)&\frac{1}{2}e^{i\theta}Q\\
\frac{1}{2}e^{-i\theta}Q^{\sharp_A}&0
\end{pmatrix}$$
and
$$ \Im_\mathbb{A}\left[e^{i\theta}\begin{pmatrix}
P &Q\\
0 &0
\end{pmatrix}\right]=-i\begin{pmatrix}
i\Im_A(e^{i\theta}P)&\frac{1}{2}e^{i\theta}Q\\
-\frac{1}{2}e^{-i\theta}Q^{\sharp_A}&0
\end{pmatrix}.$$
Moreover, by Lemma \ref{posss}, $\Im_\mathbb{A}^2\left[e^{i\theta}\begin{pmatrix}
P &Q\\
0 &0
\end{pmatrix}\right]\geq_\mathbb{A}\begin{pmatrix}
0 &0\\
0 &0
\end{pmatrix}$. So, we have
$$\Re_\mathbb{A}^2\left[e^{i\theta}\begin{pmatrix}
P &Q\\
0 &0
\end{pmatrix}\right]+\Im_\mathbb{A}^2\left[e^{i\theta}\begin{pmatrix}
P &Q\\
0 &0
\end{pmatrix}\right]-\Re_\mathbb{A}^2\left[e^{i\theta}\begin{pmatrix}
P &Q\\
0 &0
\end{pmatrix}\right]\geq_\mathbb{A}\begin{pmatrix}
0 &0\\
0 &0
\end{pmatrix}.$$
Hence, it follows from Lemma \ref{lemma:5} that
$$\left\|\Re_\mathbb{A}^2\left[e^{i\theta}\begin{pmatrix}
P &Q\\
0 &0
\end{pmatrix}\right]\right\|_\mathbb{A}\leq \left\|\Re_\mathbb{A}^2\left[e^{i\theta}\begin{pmatrix}
P &Q\\
0 &0
\end{pmatrix}\right]+\Im_\mathbb{A}^2\left[e^{i\theta}\begin{pmatrix}
P &Q\\
0 &0
\end{pmatrix}\right]\right\|_\mathbb{A}.$$
On the other hand, a short calculation reveals that
\begin{align*}
&\Re_\mathbb{A}^2\left[e^{i\theta}\begin{pmatrix}
P &Q\\
0 &0
\end{pmatrix}\right]+\Im_\mathbb{A}^2\left[e^{i\theta}\begin{pmatrix}
P &Q\\
0 &0
\end{pmatrix}\right]\\
&=\begin{pmatrix}
\Re_A^2(e^{i\theta}P)+\Im_A^2(e^{i\theta}P)&0\\
0&0
\end{pmatrix}+\begin{pmatrix}
0&\frac{P^{\sharp_A}Q}{2}\\
\frac{Q^{\sharp_A}P}{2}&0
\end{pmatrix}+\begin{pmatrix}
\frac{QQ^{\sharp_A}}{2}&0\\
0&\frac{Q^{\sharp_A}Q}{2}
\end{pmatrix}.
\end{align*}
Hence, by using Lemma \ref{max} and \eqref{zm} we see that
\begin{align}\label{333}
&\left\|\Re_\mathbb{A}\left[e^{i\theta}\begin{pmatrix}
P &Q\\
0 &0
\end{pmatrix}\right]\right\|_\mathbb{A}^2\nonumber\\
&\leq  \left\|\Re_A^2(e^{i\theta}P)+\Im_A^2(e^{i\theta}P)\right\|_A+\frac{1}{2}\max\{\|P^{\sharp_A}Q\|_A,\|Q^{\sharp_A}P\|_A\}+\frac{1}{2}\|Q\|_A^2\nonumber\\
   &\leq 2\omega_A^2(P)+\frac{1}{2} \left(\max\{\|P^{\sharp_A}Q\|_A,\|Q^{\sharp_A}P\|_A\}+\|Q\|_A^2 \right).
\end{align}
On the other hand, one observes that $P_{\overline{\mathcal{R}(A)}}A=AP_{\overline{\mathcal{R}(A)}}=A$. Moreover, by \eqref{newsemi}, we see that
\begin{align*}
\|P^{\sharp_A}Q\|_A
&=\|Q^{\sharp_A}P_{\overline{\mathcal{R}(A)}}PP_{\overline{\mathcal{R}(A)}}\|_A\\
&=\sup\left\{|\langle AP_{\overline{\mathcal{R}(A)}}x\mid (Q^{\sharp_A}P_{\overline{\mathcal{R}(A)}}P)^{\sharp_A}y\rangle|\,;\;x,y\in \mathcal{H},\,\|x\|_{A}=\|y\|_{A}= 1\right\}\\
&=\sup\left\{|\langle Q^{\sharp_A}P_{\overline{\mathcal{R}(A)}}Px\mid y\rangle_A|\,;\;x,y\in \mathcal{H},\,\|x\|_{A}=\|y\|_{A}= 1\right\}\\
&=\sup\left\{|\langle AP_{\overline{\mathcal{R}(A)}}Px\mid Qy\rangle|\,;\;x,y\in \mathcal{H},\,\|x\|_{A}=\|y\|_{A}= 1\right\}\\
&=\sup\left\{|\langle Q^{\sharp_A}Px\mid y\rangle_A|\,;\;x,y\in \mathcal{H},\,\|x\|_{A}=\|y\|_{A}= 1\right\}\\
&=\|Q^{\sharp_A}P\|_A.
\end{align*}
So, by taking into account \eqref{333}, it follows that
\begin{align*}
\left\|\Re_\mathbb{A}\left[e^{i\theta}\begin{pmatrix}
P &Q\\
0 &0
\end{pmatrix}\right]\right\|_\mathbb{A}^2
   &\leq 2\omega_A^2(P)+\frac{1}{2}\left( \|P^{\sharp_A}Q\|_A+\|Q\|_A^2 \right).
\end{align*}
By taking the supremum over all $\theta \in \mathbb{R}$ in the above inequality we obtain \eqref{2020f} as required. Finally, by using an argument similar to that used in proof of Theorem \ref{thmf}, we get the desired inequality.
\end{proof}

Our next result reads as follows.

\begin{theorem}\label{theorem:upper 3}
Let $\mathbb{T}=\begin{pmatrix}
P &Q\\
R &S
\end{pmatrix}$ be such that $P, Q, R, S\in \mathcal{B}_{A}(\mathcal{H})$. Then,
\[\omega_\mathbb{A}(\mathbb{T})\leq \min\{\mu,\nu \},\]
where
\begin{align*}
\mu
& =\sqrt{\min\{\|P+Q\|_A^2 ,\|P-Q\|_A^2\}+2\omega_A(PQ^{\sharp_A}) } \\
 &\quad\quad\quad+\sqrt{\min\{\|R+S\|_A^2 ,\|R-S\|_A^2\}+2\omega_A(SR^{\sharp_A}) },
\end{align*}
and
\begin{align*}
\nu
& =\sqrt{\min\{\|P+R\|_A^2 ,\|P-R\|_A^2\}+2\omega_A(P^{\sharp_A}R)} \\
 &\quad\quad\quad+\sqrt{\min\{\|Q+S\|_A^2 ,\|Q-S\|_A^2\}+2\omega_A(S^{\sharp_A}Q)}.
\end{align*}
\end{theorem}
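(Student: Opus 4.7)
The plan is to exploit the bound $\omega_\mathbb{A}(\cdot)\le\|\cdot\|_\mathbb{A}$ together with two natural block decompositions of $\mathbb{T}$ and the subadditivity of $\omega_\mathbb{A}(\cdot)$. The first thing I will establish is the following auxiliary estimate: for any $X,Y\in\mathcal{B}_A(\mathcal{H})$,
\begin{align*}
\|XX^{\sharp_A}+YY^{\sharp_A}\|_A &\le \min\{\|X+Y\|_A^2,\|X-Y\|_A^2\}+2\omega_A(XY^{\sharp_A}),\\
\|X^{\sharp_A}X+Y^{\sharp_A}Y\|_A &\le \min\{\|X+Y\|_A^2,\|X-Y\|_A^2\}+2\omega_A(X^{\sharp_A}Y).
\end{align*}
To prove the first inequality I will expand $\|X\pm Y\|_A^2=\|(X\pm Y)(X\pm Y)^{\sharp_A}\|_A$ using \eqref{diez} to get $\|XX^{\sharp_A}+YY^{\sharp_A}\pm(XY^{\sharp_A}+YX^{\sharp_A})\|_A$, and then invoke the reverse triangle inequality. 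The cross term $XY^{\sharp_A}+YX^{\sharp_A}$ is $A$-selfadjoint (verified directly from $AT^{\sharp_A}=T^*A$), so by \eqref{aself1} its $A$-norm equals its $A$-numerical radius; using the subadditivity of $\omega_A(\cdot)$ together with the identities $\omega_A(T^{\sharp_A})=\omega_A(T)$ (immediate from $\langle T^{\sharp_A}x,x\rangle_A=\overline{\langle Tx,x\rangle_A}$) and $\omega_A(YX^{\sharp_A})=\omega_A(XY^{\sharp_A})$ then delivers $\|XY^{\sharp_A}+YX^{\sharp_A}\|_A\le 2\omega_A(XY^{\sharp_A})$. The second inequality follows symmetrically, working with $(X+Y)^{\sharp_A}(X+Y)$ in place of $(X+Y)(X+Y)^{\sharp_A}$.

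For the $\mu$-bound I will decompose $\mathbb{T}=\begin{pmatrix}P&Q\\0&0\end{pmatrix}+\begin{pmatrix}0&0\\R&S\end{pmatrix}$. For $\mathbb{S}=\begin{pmatrix}P&Q\\0&0\end{pmatrix}$, a direct block multiplication using \eqref{diez2} gives $\mathbb{S}\mathbb{S}^{\sharp_\mathbb{A}}=\begin{pmatrix}PP^{\sharp_A}+QQ^{\sharp_A}&0\\0&0\end{pmatrix}$, so by \eqref{diez} and Lemma \ref{max} one obtains $\|\mathbb{S}\|_\mathbb{A}^2=\|PP^{\sharp_A}+QQ^{\sharp_A}\|_A$; combining \eqref{refine1} with the first auxiliary estimate produces the first summand of $\mu$. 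For the lower block I will apply the $\mathbb{A}$-unitary $\mathbb{U}=\begin{pmatrix}0&I\\I&0\end{pmatrix}$ exactly as in the proof of Theorem \ref{thmf} to reduce $\omega_\mathbb{A}\begin{pmatrix}0&0\\R&S\end{pmatrix}$ to $\omega_\mathbb{A}\begin{pmatrix}S&R\\0&0\end{pmatrix}$, and then apply the same bound with $(P,Q)$ replaced by $(S,R)$. Subadditivity of $\omega_\mathbb{A}(\cdot)$ finally yields $\omega_\mathbb{A}(\mathbb{T})\le\mu$.

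For the $\nu$-bound I will use the vertical decomposition $\mathbb{T}=\begin{pmatrix}P&0\\R&0\end{pmatrix}+\begin{pmatrix}0&Q\\0&S\end{pmatrix}$. This time I will compute $\mathbb{S}^{\sharp_\mathbb{A}}\mathbb{S}$ (rather than $\mathbb{S}\mathbb{S}^{\sharp_\mathbb{A}}$) for each block to obtain $\|P^{\sharp_A}P+R^{\sharp_A}R\|_A$ and $\|Q^{\sharp_A}Q+S^{\sharp_A}S\|_A$, to which I will apply the second auxiliary estimate; the right-hand block is handled by the same $\mathbb{A}$-unitary trick. I expect the main obstacle to be the careful verification of the $A$-selfadjointness of the cross terms $XY^{\sharp_A}+YX^{\sharp_A}$ and $X^{\sharp_A}Y+Y^{\sharp_A}X$ and of the identity $\omega_A(YX^{\sharp_A})=\omega_A(XY^{\sharp_A})$; both rely on the relation $(T^{\sharp_A})^{\sharp_A}=P_{\overline{\mathcal{R}(A)}}TP_{\overline{\mathcal{R}(A)}}$ and on the fact that factors of $P_{\overline{\mathcal{R}(A)}}$ can be absorbed when computing $A$-seminorm and $A$-numerical radius quantities.
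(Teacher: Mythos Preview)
Your proposal is correct and follows essentially the same route as the paper: both arguments bound $\omega_{\mathbb{A}}$ of each block by its $\mathbb{A}$-norm via \eqref{refine1}, compute that norm as $\|PP^{\sharp_A}+QQ^{\sharp_A}\|_A^{1/2}$ (resp.\ the $\sharp_A$-variant) using \eqref{diez} and Lemma~\ref{max}, expand $(P\pm Q)(P\pm Q)^{\sharp_A}$ to control this by $\|P\pm Q\|_A^2+2\omega_A(PQ^{\sharp_A})$ after verifying $\omega_A(PQ^{\sharp_A})=\omega_A(QP^{\sharp_A})$, and then use subadditivity together with the $\mathbb{A}$-unitary $\mathbb{U}=\begin{pmatrix}0&I\\I&0\end{pmatrix}$. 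The only cosmetic difference is in the $\nu$-bound: the paper passes to $\mathbb{T}^{\sharp_{\mathbb{A}}}$ and reuses the row decomposition, whereas you decompose $\mathbb{T}$ directly by columns and compute $\mathbb{S}^{\sharp_{\mathbb{A}}}\mathbb{S}$ instead of $\mathbb{S}\mathbb{S}^{\sharp_{\mathbb{A}}}$; these are dual formulations of the same computation.
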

\begin{proof}
By using \eqref{refine1} together with \eqref{diez} and Lemma \ref{max} we see that
\begin{align}\label{kd}
 \omega_\mathbb{A}\left[\begin{pmatrix}
    P& Q\\
    0& 0
    \end{pmatrix}\right]
    &\leq \left\|\begin{pmatrix}
    P& Q\\
    0 & 0
    \end{pmatrix}\right\|_\mathbb{A}\nonumber\\
    &=\left\|\begin{pmatrix}
    P& Q\\
    0& 0
    \end{pmatrix}
    \begin{pmatrix}
    P& Q\\
    0& 0
    \end{pmatrix}^{\sharp_\mathbb{A}}\right\|_\mathbb{A}^{\frac{1}{2}}\nonumber\\
    &=\left\|\begin{pmatrix}
    P& Q\\
    0& 0
    \end{pmatrix}\begin{pmatrix}
    P^{\sharp_A} & 0\\
    Q^{\sharp_A}& 0
    \end{pmatrix}\right\|_\mathbb{A}^{\frac{1}{2}}\nonumber\\
    &=\left\|\begin{pmatrix}
    PP^{\sharp_A}+QQ^{\sharp_A} & 0\\
    0& 0
    \end{pmatrix}\right\|_\mathbb{A}^{\frac{1}{2}}\nonumber\\
    &=\|PP^{\sharp_A}+QQ^{\sharp_A}\|_A^{\frac{1}{2}}.
\end{align}
Moreover, it is not difficult to verify that
$$
PP^{\sharp_A}+QQ^{\sharp_A}=(P\pm Q)(P\pm Q)^{\sharp_A}\mp (PQ^{\sharp_A}+QP^{\sharp_A}).
$$
So, since $PP^{\sharp_A}+QQ^{\sharp_A}\geq_A$, it follows from \eqref{aself1} that
\begin{align*}
\|PP^{\sharp_A}+QQ^{\sharp_A}\|_A
& =\omega_A(PP^{\sharp_A}+QQ^{\sharp_A}) \\
 &=\omega_A\Big( (P\pm Q)(P\pm Q)^{\sharp_A}\mp (PQ^{\sharp_A}+QP^{\sharp_A})\Big)\\
 &\leq \omega_A\Big( (P\pm Q)(P\pm Q)^{\sharp_A}\Big)+\omega_A(PQ^{\sharp_A})+\omega_A(QP^{\sharp_A})\\
  & =\left\|P\pm Q\right\|_A^2 +\omega_A(PQ^{\sharp_A})+\omega_A(QP^{\sharp_A}),
\end{align*}
where the last equality follows by using \eqref{aself1} together with \eqref{diez} since the operator $(P\pm Q)(P\pm Q)^{\sharp_A}$ is $A$-positive. Further, one observes that
\begin{align*}
\omega_A(PQ^{\sharp_A})
& =\omega_A\left((Q^{\sharp_A})^{\sharp_A}P^{\sharp_A}\right) \\
 &=\omega_A(P_{\overline{\mathcal{R}(A)}}QP_{\overline{\mathcal{R}(A)}}P^{\sharp_A})=\omega_A(P_{\overline{\mathcal{R}(A)}}QP^{\sharp_A}).
\end{align*}
This yields that $\omega_A(PQ^{\sharp_A})=\omega_A(QP^{\sharp_A})$. Thus, we get
\begin{align*}
\|PP^{\sharp_A}+QQ^{\sharp_A}\|_A
 &\leq \left\|P\pm Q\right\|_A^2+2\omega_A(PQ^{\sharp_A}).
\end{align*}
This implies that
\begin{equation*}
\|PP^{\sharp_A}+QQ^{\sharp_A}\|_A\leq \min\Big(\left\|P+ Q\right\|_A^2, \left\|P- Q\right\|_A^2\Big)+2\omega_A(PQ^{\sharp_A}).
\end{equation*}
So, by taking into account \eqref{kd}, we get
\begin{equation}\label{a5iran}
\omega_\mathbb{A}\left[\begin{pmatrix}
    P& Q\\
    0& 0
    \end{pmatrix}\right]\leq \sqrt{\min\Big(\left\|P+ Q\right\|_A^2, \left\|P- Q\right\|_A^2\Big)+2\omega_A(PQ^{\sharp_A})}.
\end{equation}
By considering the $\mathbb{A}$-unitary operator $\mathbb{U}=\begin{pmatrix}
0&I \\
I&0
\end{pmatrix}$, we see that
\begin{align*}
\omega_\mathbb{A}\left[\begin{pmatrix}
    P & Q\\
    R & S
    \end{pmatrix}\right]
    &\leq \omega_\mathbb{A}\left[\begin{pmatrix}
    P & Q\\
    0 & 0
    \end{pmatrix}\right]+\omega_\mathbb{A}\left[\begin{pmatrix}
    0 & 0\\
    R & S
    \end{pmatrix}\right]\\
    &= \omega_\mathbb{A}\left[\begin{pmatrix}
    P & Q\\
    0 & 0
    \end{pmatrix}\right]+\omega_\mathbb{A}\left[\mathbb{U}^{\sharp_\mathbb{A}}\begin{pmatrix}
    S & R\\
    0 & 0
    \end{pmatrix}\mathbb{U}\right] \\
    &=\omega_\mathbb{A}\left[\begin{pmatrix}
    P & Q\\
    0 & 0
    \end{pmatrix}\right]+\omega_\mathbb{A}\left[\begin{pmatrix}
    S & R\\
    0 & 0
    \end{pmatrix}\right]\quad (\text{by }\,\eqref{weak})\\
    &\leq \min\Big(\left\|P+ Q\right\|_A^2, \left\|P- Q\right\|_A^2\Big)+2\omega_A(PQ^{\sharp_A})\\
    &\quad\quad+ \min\Big(\left\|R+ S\right\|_A^2, \left\|R- S\right\|_A^2\Big)+2\omega_A(SR^{\sharp_A})
\end{align*}
By observing that $\omega_\mathbb{A}\left[\begin{pmatrix}
    P & Q\\
    R & S
    \end{pmatrix}\right]=\omega_\mathbb{A}\left[\begin{pmatrix}
    P^{\sharp_A} & R^{\sharp_A}\\
    Q^{\sharp_A} & S^{\sharp_A}
    \end{pmatrix}\right]$ and using similar arguments as above we get
\begin{align*}
\omega_\mathbb{A}\left[\begin{pmatrix}
    P & Q\\
    R & S
    \end{pmatrix}\right]
& = \omega_\mathbb{A}\left[\begin{pmatrix}
    P^{\sharp_A} & R^{\sharp_A}\\
    Q^{\sharp_A} & S^{\sharp_A}
    \end{pmatrix}\right]\\
 &\leq \min\Big(\left\|P^{\sharp_A}+ R^{\sharp_A}\right\|_A^2, \left\|P^{\sharp_A}-R^{\sharp_A}\right\|_A^2\Big)+2\omega_A(P^{\sharp_A}(R^{\sharp_A})^{\sharp_A})\\
    &\quad\quad+ \min\Big(\left\|Q^{\sharp_A}+ S^{\sharp_A}\right\|_A^2, \left\|Q^{\sharp_A}- S^{\sharp_A}\right\|_A^2\Big)+2\omega_A(S^{\sharp_A}(Q^{\sharp_A})^{\sharp_A})\\
 &=\min\Big(\left\|P+ R\right\|_A^2, \left\|P-R\right\|_A^2\Big)+2\omega_A(R^{\sharp_A}P)\\
    &\quad\quad+ \min\Big(\left\|Q+ S\right\|_A^2, \left\|Q- S\right\|_A^2\Big)+2\omega_A(Q^{\sharp_A}S).
\end{align*}
Hence, the proof is complete since $\omega_A(R^{\sharp_A}P)=\omega_A(P^{\sharp_A}R)$ and $\omega_A(Q^{\sharp_A}S)=\omega_A(S^{\sharp_A}Q)$.
\end{proof}
In order to prove a lower bound for $\omega_\mathbb{A}\left[\begin{pmatrix}
    P& Q\\
    0& 0
    \end{pmatrix}\right]$, we need the following lemmas.

\begin{lemma}\label{456}
Let $T,S\in\mathcal{B}_{A}(\mathcal{H})$. Then
\begin{equation}
\max\Big\{\left\Vert T+S\right\Vert_A^{2}, \left\Vert T-S\right\Vert_A^{2}\Big\}-\|TT^{\sharp_A}+SS^{\sharp_A}\|_A\leq 2\omega_A\left(TS^{\sharp_A}\right).  \label{2.7}
\end{equation}
\end{lemma}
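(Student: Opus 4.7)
The plan is to derive the inequality by exploiting the algebraic identity, already used in the proof of the previous theorem, that expresses $TT^{\sharp_A}+SS^{\sharp_A}$ in terms of $(T\pm S)(T\pm S)^{\sharp_A}$, and then to control the remainder using the real part $\Re_A(TS^{\sharp_A})$.

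First, I would expand the products and record
\begin{equation*}
(T\pm S)(T\pm S)^{\sharp_A}=TT^{\sharp_A}+SS^{\sharp_A}\pm\bigl(TS^{\sharp_A}+ST^{\sharp_A}\bigr).
\end{equation*}
Since $(T\pm S)(T\pm S)^{\sharp_A}$ is $A$-positive, relation \eqref{diez} yields $\|(T\pm S)(T\pm S)^{\sharp_A}\|_A=\|T\pm S\|_A^2$. Applying the triangle inequality for $\|\cdot\|_A$ then gives
\begin{equation*}
\|T\pm S\|_A^2\leq \|TT^{\sharp_A}+SS^{\sharp_A}\|_A+\|TS^{\sharp_A}+ST^{\sharp_A}\|_A.
\end{equation*}

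The heart of the argument is to prove $\|TS^{\sharp_A}+ST^{\sharp_A}\|_A\leq 2\omega_A(TS^{\sharp_A})$. Here is the point where care is needed: $TS^{\sharp_A}+ST^{\sharp_A}$ is not literally $2\Re_A(TS^{\sharp_A})$, because $(TS^{\sharp_A})^{\sharp_A}=(S^{\sharp_A})^{\sharp_A}T^{\sharp_A}=P_{\overline{\mathcal{R}(A)}}SP_{\overline{\mathcal{R}(A)}}T^{\sharp_A}=P_{\overline{\mathcal{R}(A)}}ST^{\sharp_A}$ (using $\mathcal{R}(T^{\sharp_A})\subseteq\overline{\mathcal{R}(A)}$). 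However, the operator $ST^{\sharp_A}-(TS^{\sharp_A})^{\sharp_A}=(I-P_{\overline{\mathcal{R}(A)}})ST^{\sharp_A}$ has its range inside $\mathcal{N}(A)$, so applied to any vector it produces an element of $A$-seminorm zero. Consequently, for every $x\in\mathcal{H}$ we have $\|(TS^{\sharp_A}+ST^{\sharp_A})x\|_A=\|(TS^{\sharp_A}+(TS^{\sharp_A})^{\sharp_A})x\|_A$, and therefore
\begin{equation*}
\|TS^{\sharp_A}+ST^{\sharp_A}\|_A=2\|\Re_A(TS^{\sharp_A})\|_A.
\end{equation*}
Specializing the identity \eqref{zm} at $\theta=0$ (noting that $TS^{\sharp_A}\in\mathcal{B}_A(\mathcal{H})$ since $\mathcal{B}_A(\mathcal{H})$ is an algebra) yields $\|\Re_A(TS^{\sharp_A})\|_A\leq\omega_A(TS^{\sharp_A})$, and the claim follows.

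Combining these two ingredients gives $\|T\pm S\|_A^2\leq\|TT^{\sharp_A}+SS^{\sharp_A}\|_A+2\omega_A(TS^{\sharp_A})$ for both sign choices; taking the maximum over the two signs and rearranging produces exactly inequality \eqref{2.7}. The only nontrivial step is the identification of $\|TS^{\sharp_A}+ST^{\sharp_A}\|_A$ with $2\|\Re_A(TS^{\sharp_A})\|_A$; all other steps are straightforward applications of the triangle inequality and properties already collected in the preliminaries.
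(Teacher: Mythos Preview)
Your argument is correct, and it takes a genuinely different route from the paper's. The paper works pointwise: for a unit $A$-vector $x$ it expands
\[
\|(T+S)x\|_A^2=\langle T^{\sharp_A}Tx\mid x\rangle_A+\langle S^{\sharp_A}Sx\mid x\rangle_A+2\Re\langle S^{\sharp_A}Tx\mid x\rangle_A,
\]
bounds the first two terms by $\omega_A(T^{\sharp_A}T+S^{\sharp_A}S)=\|T^{\sharp_A}T+S^{\sharp_A}S\|_A$ and the last by $2\omega_A(S^{\sharp_A}T)$, and then takes the supremum. Your proof instead stays at the operator level: you use the factorization $(T\pm S)(T\pm S)^{\sharp_A}=TT^{\sharp_A}+SS^{\sharp_A}\pm(TS^{\sharp_A}+ST^{\sharp_A})$ together with \eqref{diez}, and then the key observation that $ST^{\sharp_A}$ and $(TS^{\sharp_A})^{\sharp_A}$ differ by an operator with range in $\mathcal{N}(A)$, so that $\|TS^{\sharp_A}+ST^{\sharp_A}\|_A=2\|\Re_A(TS^{\sharp_A})\|_A\le 2\omega_A(TS^{\sharp_A})$ via \eqref{zm}. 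A pleasant by-product of your route is that it lands directly on the terms $TT^{\sharp_A}+SS^{\sharp_A}$ and $\omega_A(TS^{\sharp_A})$ appearing in the statement, whereas the paper's pointwise computation actually produces $T^{\sharp_A}T+S^{\sharp_A}S$ and $\omega_A(S^{\sharp_A}T)$ and implicitly relies on the identifications $\|P_{\overline{\mathcal{R}(A)}}X\|_A=\|X\|_A$ and $\omega_A(X^{\sharp_A})=\omega_A(X)$ to match the stated form. The paper's argument is slightly more elementary (no appeal to \eqref{zm}), while yours makes the role of $\Re_A(TS^{\sharp_A})$ explicit and reuses the identity already exploited in the preceding theorem.
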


\begin{proof}
Let $x\in \mathcal{H}$ be such that $\|x\|_A=1$. We obviously have%
\begin{align*}
\left\Vert Tx+Sx\right\Vert_A ^{2}& =\left\Vert Tx\right\Vert_A ^{2}+2\Re\left(%
\left\langle Tx\mid Sx\right\rangle_A\right) +\left\Vert Sx\right\Vert_A ^{2} \\
& \leq \left\langle \left( T^{\sharp_A}T+S^{\sharp_A}S\right) x\mid x\right\rangle_A
+2\left\vert \left\langle \left( S^{\sharp_A}T\right) x\mid x\right\rangle_A\right\vert\\
& \leq \omega_A \left( T^{\sharp_A}T+S^{\sharp_A}S\right)+2\omega_A \left( S^{\sharp_A}T\right)\\
&=\left\|T^{\sharp_A}T+S^{\sharp_A}S\right\|_A+2\omega_A \left( S^{\sharp_A}T\right),
\end{align*}%
where the last equality follows since $T^{\sharp_A}T+S^{\sharp_A}S\geq_A0$. So, by taking the supremum over all $x\in \mathcal{H}$ with $\left\Vert x\right\Vert_A =1$ in the above inequality we get%
\begin{align*}
\left\Vert T+S\right\Vert_A ^{2}
&\leq \omega_A \left( T^{\sharp_A}T+S^{\sharp_A}S\right)+2\omega_A \left( S^{\sharp_A}T\right).
\end{align*}%
Similarly, we prove that
\begin{align*}
\left\Vert T-S\right\Vert_A ^{2}
&\leq\left\|T^{\sharp_A}T+S^{\sharp_A}S\right\|_A+2\omega_A \left( S^{\sharp_A}T\right).
\end{align*}%
Hence, we get the desired inequality \eqref{2.7}.
\end{proof}

\begin{lemma}
Let $T,S\in \mathcal{B}(\mathcal{H})$. Then, the following assertions hold
\begin{itemize}
  \item [(1)] If $T\geq_A0$ and $S\geq_A0$, then
  \begin{equation}\label{eq109}
\|T-S\|_A\leq \max\{\|T\|_A,\|S\|_A\}.
\end{equation}
  \item [(2)] If $T,S\in \mathcal{B}_A(\mathcal{H})$, then
  \begin{equation}\label{fin}
2\|T^{\sharp_A}S\|_A\leq \|TT^{\sharp_A}+SS^{\sharp_A}\|_A.
\end{equation}
\end{itemize}
\end{lemma}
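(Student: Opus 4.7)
For part (1), the plan is to reduce everything to an $A$-numerical-radius computation by exploiting that $T-S$ is $A$-selfadjoint (as a difference of two $A$-selfadjoint operators, which $A$-positive operators are). Identity \eqref{aself1} then gives $\|T-S\|_A=\omega_A(T-S)$ and similarly $\|T\|_A=\omega_A(T)$, $\|S\|_A=\omega_A(S)$. For $x\in\mathcal{H}$ with $\|x\|_A=1$, I would set $a=\langle Tx\mid x\rangle_A$ and $b=\langle Sx\mid x\rangle_A$; both are nonnegative reals (by $A$-positivity) with $a\le\|T\|_A$ and $b\le\|S\|_A$, so the elementary inequality $|a-b|\le\max\{a,b\}$ gives $|\langle(T-S)x\mid x\rangle_A|\le\max\{\|T\|_A,\|S\|_A\}$. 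Taking the supremum over such $x$ yields \eqref{eq109}.

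For part (2), the cleanest route is to transfer to the genuine Hilbert space $\mathbf{R}(A^{1/2})$ via the isometry $T\mapsto\widetilde{T}$ of \eqref{tilde}. From $Z_{A}T=\widetilde{T}Z_{A}$, identity \eqref{I.2}, and the density of $\mathcal{R}(A)$ in $\mathbf{R}(A^{1/2})$, I would verify the two standard identities $\widetilde{TS}=\widetilde{T}\,\widetilde{S}$ and $\widetilde{T^{\sharp_A}}=(\widetilde{T})^{*}$, the second by computing $\langle \widetilde{T}Z_{A}u,Z_{A}v\rangle_{\mathbf{R}(A^{1/2})}$ two ways using \eqref{I.2} and the definition of $T^{\sharp_A}$. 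Together with \eqref{tilde} these give $\|T^{\sharp_A}S\|_A=\|(\widetilde{T})^{*}\widetilde{S}\|$ and $\|TT^{\sharp_A}+SS^{\sharp_A}\|_A=\|\widetilde{T}(\widetilde{T})^{*}+\widetilde{S}(\widetilde{S})^{*}\|$, so \eqref{fin} reduces to the classical Bhatia--Kittaneh arithmetic--geometric-mean inequality $2\|X^{*}Y\|\le\|XX^{*}+YY^{*}\|$ applied to $X=\widetilde{T}$, $Y=\widetilde{S}$ in $\mathbf{R}(A^{1/2})$.

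The delicate step, and the main obstacle, is the factor of $2$ in \eqref{fin}. A naive polarization starting from $\langle T^{\sharp_A}Sx\mid y\rangle_A=\langle Sx\mid Ty\rangle_A$ combined with $2|\langle u\mid v\rangle_A|\le\|u\|_A^{2}+\|v\|_A^{2}$ produces only $2\|T^{\sharp_A}S\|_A\le\|T^{\sharp_A}T+S^{\sharp_A}S\|_A$, which carries the \emph{wrong} adjoint ordering on the right. A self-contained block-matrix substitute with $M=\bigl(\begin{smallmatrix}T & S\\ 0 & 0\end{smallmatrix}\bigr)$ gives $\|M\|_{\mathbb{A}}^{2}=\|TT^{\sharp_A}+SS^{\sharp_A}\|_A$ via \eqref{diez} and Lemma~\ref{max}, and identifies $T^{\sharp_A}S$ as the $(1,2)$-block of $M^{\sharp_{\mathbb{A}}}M$, whence $\|T^{\sharp_A}S\|_A\le\|M\|_{\mathbb{A}}^{2}$ is easy; but obtaining the sharp factor $1/2$ on the left cannot be done without essentially invoking the same Heinz-type inequality. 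This is why the $\widetilde{\cdot}$-transfer plus the classical Bhatia--Kittaneh inequality is the natural tool.
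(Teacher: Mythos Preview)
Your argument for part~(1) is correct and is essentially the same as the paper's: both rely on the fact that $T-S$ is $A$-selfadjoint and use \eqref{aself1} to reduce to a numerical-radius estimate. The paper phrases it as a sandwich $-\|S\|_A I\le_A T-S\le_A \|T\|_A I$ followed by Lemma~\ref{lemma:5}, while you compute $|\langle(T-S)x\mid x\rangle_A|=|a-b|\le\max\{a,b\}$ directly; your formulation is arguably cleaner.

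For part~(2) your transfer-to-$\mathbf{R}(A^{1/2})$ route is valid and genuinely different from the paper's. The paper stays entirely inside the semi-Hilbertian framework: with $\mathbb{T}=\bigl(\begin{smallmatrix}T&S\\0&0\end{smallmatrix}\bigr)$ and the $\mathbb{A}$-unitary $\mathbb{U}=\bigl(\begin{smallmatrix}I&0\\0&-I\end{smallmatrix}\bigr)$, it computes
\[
(\mathbb{T}^{\sharp_\mathbb{A}}\mathbb{T})^{\sharp_\mathbb{A}}-\mathbb{U}^{\sharp_\mathbb{A}}(\mathbb{T}^{\sharp_\mathbb{A}}\mathbb{T})^{\sharp_\mathbb{A}}\mathbb{U}^{\sharp_\mathbb{A}}
=\begin{pmatrix}0&2(T^{\sharp_A}S)^{\sharp_A}\\ 2(S^{\sharp_A}T)^{\sharp_A}&0\end{pmatrix},
\]
then applies Lemma~\ref{max} and, crucially, part~(1) of the present lemma (both displayed operators are $\mathbb{A}$-positive) to bound the left-hand $\mathbb{A}$-norm by $\|\mathbb{T}^{\sharp_\mathbb{A}}\mathbb{T}\|_\mathbb{A}=\|\mathbb{T}\mathbb{T}^{\sharp_\mathbb{A}}\|_\mathbb{A}=\|TT^{\sharp_A}+SS^{\sharp_A}\|_A$. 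So the block-matrix idea you set aside is exactly the paper's approach, and the ``missing'' factor $2$ is recovered precisely by invoking part~(1) on that difference of $\mathbb{A}$-positive operators. Your method is shorter but imports the classical Bhatia--Kittaneh inequality; the paper's method is self-contained in the $A$-setting and explains why part~(1) precedes part~(2).
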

\begin{proof}
\noindent (1)\;Let $Q=T-S$. It is not difficult to see that
$$\|T\|_AI\geq_AT\geq_AQ\;\,\text{ and }\;\,\|S\|_AI\geq_AS\geq_A-Q.$$
This implies, by Lemma \ref{lemma:5}, that $\|Q\|_A\leq \|T\|_A$ and $\|Q\|_A\leq \|S\|_A$. This proves the desired property.
\par \vskip 0.1 cm \noindent (2)\;Let $\mathbb{T}=\begin{pmatrix}T&S\\0&0\end{pmatrix}$. In view of \eqref{diez2} we see that
$$\mathbb{T}\mathbb{T}^{\sharp_\mathbb{A}}=\begin{pmatrix}TT^{\sharp_A}+SS^{\sharp_A}&0\\0&0\end{pmatrix}\text{ and } \mathbb{T}^{\sharp_\mathbb{A}}\mathbb{T}=\begin{pmatrix}T^{\sharp_A}T&T^{\sharp_A}S\\S^{\sharp_A}T&S^{\sharp_A}S\end{pmatrix}.$$
Let $\mathbb{U}=\begin{pmatrix}
I&O \\
O&-I
\end{pmatrix}.$ By using \eqref{diez2}, one gets $\mathbb{U}^{\sharp_{\mathbb{A}}}=\begin{pmatrix}
P_{\overline{\mathcal{R}(A)}}&O \\
O&-P_{\overline{\mathcal{R}(A)}}
\end{pmatrix}.$ So, we verify that $\|\mathbb{U}x\|_\mathbb{A}=\|\mathbb{U}^{\sharp_\mathbb{A}}x\|_\mathbb{A}=\|x\|_\mathbb{A}$ for all $x=(x_1,x_2)\in \mathcal{H}\oplus \mathcal{H}$. Hence, $\mathbb{U}$ is $\mathbb{A}$-unitary operator. Moreover, clearly we have $(\mathbb{U}^{\sharp_{\mathbb{A}}})^{\sharp_{\mathbb{A}}}=\mathbb{U}^{\sharp_{\mathbb{A}}}$. In addition, a short calculation shows that
$$
(\mathbb{T}^{\sharp_\mathbb{A}}\mathbb{T})^{\sharp_\mathbb{A}}-\mathbb{U}^{\sharp_{\mathbb{A}}}(\mathbb{T}^{\sharp_\mathbb{A}}\mathbb{T})^{\sharp_\mathbb{A}} \mathbb{U}^{\sharp_{\mathbb{A}}}=\begin{pmatrix}
0&2(T^{\sharp_A}S)^{\sharp_A}\\
2(S^{\sharp_A}T)^{\sharp_A}&0
\end{pmatrix}.
$$
So, by applying Lemma \ref{max} and then using \eqref{eq109} we get
\begin{align*}
2\|T^{\sharp_A}S\|_A
& =\left\|(\mathbb{T}^{\sharp_\mathbb{A}}\mathbb{T})^{\sharp_\mathbb{A}}-\mathbb{U}^{\sharp_{\mathbb{A}}}(\mathbb{T}^{\sharp_\mathbb{A}}\mathbb{T})^{\sharp_\mathbb{A}} \mathbb{U}^{\sharp_{\mathbb{A}}}\right\|_\mathbb{A} \\
 &\leq \max\left\{\left\|(\mathbb{T}^{\sharp_\mathbb{A}}\mathbb{T})^{\sharp_\mathbb{A}}\right\|_\mathbb{A}, \left\|\mathbb{U}^{\sharp_{\mathbb{A}}}(\mathbb{T}^{\sharp_\mathbb{A}}\mathbb{T})^{\sharp_\mathbb{A}} \mathbb{U}^{\sharp_{\mathbb{A}}}\right\|_\mathbb{A}\right\}\\
  &=\max\left\{\left\|\mathbb{T}^{\sharp_\mathbb{A}}\mathbb{T}\right\|_\mathbb{A}, \left\|\mathbb{U}(\mathbb{T}^{\sharp_\mathbb{A}}\mathbb{T})\mathbb{U}\right\|_\mathbb{A}\right\}\\
  &\leq \left\|\mathbb{T}^{\sharp_\mathbb{A}}\mathbb{T}\right\|_\mathbb{A}\quad (\text{since }\,\left\|\mathbb{U}\right\|_\mathbb{A}=1)\\
   &=\left\|\mathbb{T}\mathbb{T}^{\sharp_\mathbb{A}}\right\|_\mathbb{A}=\|TT^{\sharp_A}+SS^{\sharp_A}\|_A\quad(\text{by Lemma } \ref{max}).
\end{align*}
Hence, we prove the desired result.
\end{proof}

\begin{lemma}\label{ffii}
Let $T,S\in \mathcal{B}_{A}(\mathcal{H})$. Then,
\begin{align*}
&\max\left\{\|T+S\|_A^2 , \|T-S\|_A^2\right\}\\
&\geq \frac{\left|\,\|T+S\|_A^2-\|T-S\|_A^2 \right|}{2}+\max\left\{\|T^2+S^2\|_A , \|T^{\sharp_A}T+S^{\sharp_A}S\|_A, \|TT^{\sharp_A}+SS^{\sharp_A}\|_A\right\}.
\end{align*}
\end{lemma}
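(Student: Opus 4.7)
Since the claim exhibits the familiar shape $\max(a,b)=\tfrac{a+b}{2}+\tfrac{|a-b|}{2}$ on the left with $a=\|T+S\|_A^2$ and $b=\|T-S\|_A^2$, the inequality is equivalent, after canceling the common $\tfrac{|a-b|}{2}$, to the symmetric bound
\begin{equation*}
\|T+S\|_A^2+\|T-S\|_A^2\;\geq\;2\max\bigl\{\|T^2+S^2\|_A,\;\|T^{\sharp_A}T+S^{\sharp_A}S\|_A,\;\|TT^{\sharp_A}+SS^{\sharp_A}\|_A\bigr\}.
\end{equation*}
So the plan is to dominate each of the three entries in the maximum by $\tfrac{1}{2}(\|T+S\|_A^2+\|T-S\|_A^2)$ separately; the resulting three estimates then combine to give the desired inequality.

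The first bound, $2\|T^2+S^2\|_A\le\|T+S\|_A^2+\|T-S\|_A^2$, follows from the polynomial identity $(T+S)^2+(T-S)^2=2(T^2+S^2)$ together with the triangle inequality and the submultiplicativity of $\|\cdot\|_A$ on $\mathcal{B}_{A^{1/2}}(\mathcal{H})$. For the second bound, I will use the semi-Hilbertian parallelogram law: for every $x\in\mathcal{H}$ with $\|x\|_A=1$,
\begin{equation*}
\|(T+S)x\|_A^2+\|(T-S)x\|_A^2\;=\;2\bigl(\|Tx\|_A^2+\|Sx\|_A^2\bigr)\;=\;2\langle(T^{\sharp_A}T+S^{\sharp_A}S)x\mid x\rangle_A.
\end{equation*}
The left-hand side is $\le\|T+S\|_A^2+\|T-S\|_A^2$, while the supremum of the right-hand side over such $x$ equals $2\omega_A(T^{\sharp_A}T+S^{\sharp_A}S)$; since $T^{\sharp_A}T+S^{\sharp_A}S\ge_A 0$, formula \eqref{aself1} identifies this with $2\|T^{\sharp_A}T+S^{\sharp_A}S\|_A$.

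For the third bound, I will replay the same parallelogram argument after substituting $T^{\sharp_A}$ and $S^{\sharp_A}$ for $T$ and $S$: this turns the right-hand side into $2\bigl(\|T^{\sharp_A}x\|_A^2+\|S^{\sharp_A}x\|_A^2\bigr)=2\langle(TT^{\sharp_A}+SS^{\sharp_A})x\mid x\rangle_A$, where I use the defining $A$-adjoint relation $\langle TT^{\sharp_A}x\mid x\rangle_A=\langle T^{\sharp_A}x\mid T^{\sharp_A}x\rangle_A$. The left-hand side is bounded by $\|T^{\sharp_A}+S^{\sharp_A}\|_A^2+\|T^{\sharp_A}-S^{\sharp_A}\|_A^2$, and by $(T\pm S)^{\sharp_A}=T^{\sharp_A}\pm S^{\sharp_A}$ together with $\|B^{\sharp_A}\|_A=\|B\|_A$ (a consequence of \eqref{diez}), this is $\|T+S\|_A^2+\|T-S\|_A^2$. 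Finally, $TT^{\sharp_A}+SS^{\sharp_A}$ is $A$-positive, so \eqref{aself1} again lets me replace $\omega_A$ by $\|\cdot\|_A$ after taking the supremum. The only mildly delicate step is keeping track of this passage between $\omega_A$ and $\|\cdot\|_A$ via $A$-positivity and of the identity $\|B^{\sharp_A}\|_A=\|B\|_A$—but both are furnished directly by the preliminaries, so I expect no real obstacle.
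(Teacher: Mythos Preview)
Your argument is correct, and its overall architecture matches the paper's: you reduce via $\max(a,b)=\tfrac{a+b}{2}+\tfrac{|a-b|}{2}$ to bounding each of the three quantities by $\tfrac{1}{2}(\|T+S\|_A^2+\|T-S\|_A^2)$, handle $\|T^2+S^2\|_A$ through the identity $(T+S)^2+(T-S)^2=2(T^2+S^2)$ with submultiplicativity, and obtain the $TT^{\sharp_A}+SS^{\sharp_A}$ bound from the $T^{\sharp_A}T+S^{\sharp_A}S$ bound by substituting $T^{\sharp_A},S^{\sharp_A}$ for $T,S$ and invoking $\|X^{\sharp_A}\|_A=\|X\|_A$.

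The one genuine difference is in the $T^{\sharp_A}T+S^{\sharp_A}S$ step. The paper works at the operator level: it uses \eqref{diez} to write $\|T\pm S\|_A^2=\|(T^{\sharp_A}\pm S^{\sharp_A})(T\pm S)\|_A$, then expands and applies the triangle inequality to the operator norm of the sum, landing on $\|2(T^{\sharp_A}T+S^{\sharp_A}S)\|_A$. You instead work at the vector level via the seminorm parallelogram law $\|(T+S)x\|_A^2+\|(T-S)x\|_A^2=2\langle(T^{\sharp_A}T+S^{\sharp_A}S)x\mid x\rangle_A$, then pass to the operator norm by taking the supremum and using \eqref{aself1} for the $A$-positive operator $T^{\sharp_A}T+S^{\sharp_A}S$. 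Both routes are short and entirely within the preliminaries; yours is perhaps a touch more elementary in that it avoids the $C^*$-type identity \eqref{diez} for this step, while the paper's version is slightly more uniform in that all three bounds are proved by the same ``expand-and-triangle'' pattern at the operator-norm level.
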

\begin{proof}
Notice that for any two real numbers $x$ and $y$ we have
\begin{equation}\label{r}
\max\{x,y\}=\frac{1}{2}\left(x+y+|x-y|\right).
\end{equation}
Now, by using \eqref{diez} together with \eqref{r} we see that
\begin{align}\label{repl}
&\max\left\{\|T+S\|_A^2 , \|T-S\|_A^2\right\}\nonumber\\
& = \frac{1}{2}\left(\|T+S\|_A^2+\|T-S\|_A^2+|\,\|T+S\|_A^2-\|T-S\|_A^2\,|\right)\nonumber\\
 &= \frac{1}{2}\left(\left\|(T^{\sharp_A}+S^{\sharp_A})(T+S)\right\|_A+\|(T^{\sharp_A}-S^{\sharp_A})(T-S)\|_A+|\,\|T+S\|_A^2-\|T-S\|_A^2|\right)\nonumber\\
  &\geq\frac{1}{2}\left(\left\|(T^{\sharp_A}+S^{\sharp_A})(T+S)+(T^{\sharp_A}-S^{\sharp_A})(T-S)\right\|_A+|\,\|T+S\|_A^2-\|T-S\|_A^2\,|\right)\nonumber\\
    &=\left\|T^{\sharp_A}T+S^{\sharp_A}S\right\|_A+\frac{\Big|\,\|T+S\|_A^2-\|T-S\|_A^2\Big|}{2}.
\end{align}
By replacing $T$ and $S$ by $T^{\sharp_A}$ and $S^{\sharp_A}$, respectively, in \eqref{repl} and then using the fact that $\|X\|_A=\|X^{\sharp_A}\|_A$ for every $X\in \mathcal{B}_{A}(\mathcal{H})$ we get
\begin{align*}
\max\left\{\|T+S\|_A^2 , \|T-S\|_A^2\right\}
    &\geq \left\|TT^{\sharp_A}+SS^{\sharp_A}\right\|_A+\frac{\Big|\,\|T+S\|_A^2-\|T-S\|_A^2\Big|}{2}.
\end{align*}
On the other hand, by \eqref{r} one has
\begin{align*}
&\max\left\{\|T+S\|_A^2 , \|T-S\|_A^2\right\}\\
& = \frac{1}{2}\left(\|T+S\|_A^2+\|T-S\|_A^2+|\,\|T+S\|_A^2-\|T-S\|_A^2\,|\right)\\
 &\geq \frac{1}{2}\left(\left\|(T+S)^2\right\|_A+\left\|(T-S)^2\right\|_A+|\,\|T+S\|_A^2-\|T-S\|_A^2\,|\right)\\
  &\geq \frac{1}{2}\left(\left\|(T+S)^2+(T-S)^2\right\|_A+|\,\|T+S\|_A^2-\|T-S\|_A^2\,|\right)\\
  &= \left\|T^2+S^2\right\|_A+\frac{\Big|\,\|T+S\|_A^2-\|T-S\|_A^2\Big|}{2}.
\end{align*}
So, the proof of the lemma is complete.
\end{proof}

Now we are ready to prove the following theorem.
\begin{theorem}\label{lastthm}
Let $P, Q\in \mathcal{B}_{A}(\mathcal{H})$. Then,
\begin{equation}
\omega_\mathbb{A}\left[\begin{pmatrix}
    P& Q\\
    0& 0
    \end{pmatrix}\right]\geq \frac{1}{2}\sqrt{\max\Big(\left\|P+ Q\right\|_A^2, \left\|P- Q\right\|_A^2\Big)-2\omega_A(PQ^{\sharp_A})}.
\end{equation}
\end{theorem}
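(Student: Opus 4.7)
My plan is to chain together two ingredients that are already available in the paper: the lower half of the refinement inequality \eqref{refine1} (which says $\omega_\mathbb{A} \geq \tfrac{1}{2}\|\cdot\|_\mathbb{A}$) and Lemma \ref{456}, which controls $\max\{\|T+S\|_A^2, \|T-S\|_A^2\}$ from above by $\|TT^{\sharp_A}+SS^{\sharp_A}\|_A + 2\omega_A(TS^{\sharp_A})$. The key observation is that the norm of $\mathbb{T}=\begin{pmatrix} P & Q\\ 0 & 0\end{pmatrix}$ in the $\mathbb{A}$-seminorm is exactly $\|PP^{\sharp_A}+QQ^{\sharp_A}\|_A^{1/2}$, which is precisely the quantity estimated by Lemma \ref{456}.

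First, I would invoke the lower bound from \eqref{refine1} to write $\omega_\mathbb{A}(\mathbb{T}) \geq \tfrac{1}{2}\|\mathbb{T}\|_\mathbb{A}$. Next, using \eqref{diez} together with \eqref{diez2} and Lemma \ref{max}, I would compute exactly as in the chain of equalities \eqref{kd} in the previous theorem:
\begin{equation*}
\|\mathbb{T}\|_\mathbb{A} = \|\mathbb{T}\mathbb{T}^{\sharp_\mathbb{A}}\|_\mathbb{A}^{1/2} = \|PP^{\sharp_A}+QQ^{\sharp_A}\|_A^{1/2}.
\end{equation*}

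Then I would apply Lemma \ref{456} with $T=P$ and $S=Q$, which after rearrangement yields
\begin{equation*}
\|PP^{\sharp_A}+QQ^{\sharp_A}\|_A \geq \max\Big\{\|P+Q\|_A^2, \|P-Q\|_A^2\Big\} - 2\omega_A(PQ^{\sharp_A}).
\end{equation*}
Taking square roots (the left-hand side is always nonnegative, so the inequality is trivial if the right-hand side is negative) and multiplying by $\tfrac{1}{2}$ combines with the preceding displays to give the stated lower bound.

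I do not expect any serious obstacle here: the proof is essentially a one-line combination of \eqref{refine1}, the identity \eqref{kd} (which is already established in the paper), and Lemma \ref{456}. The only small point to be careful about is the sign of the quantity under the square root, but this is handled automatically by the trivial inequality $\|PP^{\sharp_A}+QQ^{\sharp_A}\|_A \geq 0$.
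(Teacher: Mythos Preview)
Your proposal is correct and follows essentially the same route as the paper: apply the lower bound in \eqref{refine1}, identify $\|\mathbb{T}\|_\mathbb{A}^2$ with $\|PP^{\sharp_A}+QQ^{\sharp_A}\|_A$ via \eqref{diez} and Lemma \ref{max}, and then invoke Lemma \ref{456}. The only point where the paper works harder than you do is in verifying that the radicand $\max\{\|P+Q\|_A^2,\|P-Q\|_A^2\}-2\omega_A(PQ^{\sharp_A})$ is actually nonnegative: the paper proves this separately using \eqref{fin} and Lemma \ref{ffii}, whereas you wave it away as making the inequality ``trivial''---which is not quite right, since the theorem as stated presupposes a real square root, so the well-definedness of the bound is part of the content.
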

\begin{proof}
We first prove that
\begin{equation}\label{poss}
\max\Big(\left\|P+ Q\right\|_A^2, \left\|P- Q\right\|_A^2\Big)-2\omega_A(PQ^{\sharp_A})\geq0.
\end{equation}
By applying \eqref{fin} together with the second inequality in \eqref{refine1}, one observes
$$
2\omega_A(PQ^{\sharp_A})\leq \left\|P^{\sharp_A}P+Q^{\sharp_A}Q\right\|_A.
$$
This implies, by applying Lemma \ref{ffii}, that
\begin{align*}
\max\Big(\left\|P+ Q\right\|_A^2, \left\|P- Q\right\|_A^2\Big)
&\geq \left\|P^{\sharp_A}P+Q^{\sharp_A}Q\right\|_A+\frac{\left|\,\|P+Q\|_A^2-\|P-Q\|_A^2 \right|}{2}\\
 &\geq 2\omega_A(PQ^{\sharp_A})+\frac{\left|\,\|P+Q\|_A^2-\|P-Q\|_A^2 \right|}{2}.
\end{align*}
Hence, \eqref{poss} holds. Now, by using the first inequality in \eqref{refine1} we get
\begin{align*}
\omega_\mathbb{A}\left[\begin{pmatrix}
    P& Q\\
    0& 0
    \end{pmatrix}\right]
&\geq \frac{1}{4}\left\|\begin{pmatrix}
    P& Q\\
    0& 0
    \end{pmatrix} \right\|_\mathbb{A}^2\\
 &=\frac{1}{4}\left\|\begin{pmatrix}
    P& Q\\
    0& 0
    \end{pmatrix} \begin{pmatrix}
    P^{\sharp_A}& 0\\
    Q^{\sharp_A}& 0
    \end{pmatrix} \right\|_\mathbb{A}\quad (\text{by }\,\eqref{diez})\\
 &=\frac{1}{4}\left\|\begin{pmatrix}
    PP^{\sharp_A}+ QQ^{\sharp_A}& 0\\
    0& 0
    \end{pmatrix} \right\|_\mathbb{A}\\
 &=\frac{1}{4}\|PP^{\sharp_A}+ QQ^{\sharp_A}\|_A\quad(\text{by Lemma } \ref{max})\\
 &\geq\frac{1}{4} \max\Big\{\left\Vert T+S\right\Vert_A^{2}, \left\Vert T-S\right\Vert_A^{2}\Big\}- 2\omega_A\left(TS^{\sharp_A}\right),
\end{align*}
where the last inequality follows from Lemma \ref{456}. This finishes the proof of the theorem.
\end{proof}
The following corollary is an immediate consequence of Theorem \ref{lastthm} and \eqref{a5iran}.
\begin{corollary}
Let $P, Q\in \mathcal{B}_{A}(\mathcal{H})$ be such that $APQ^{\sharp_A}=0$. Then,
\begin{equation*}
\frac{1}{2}\max\Big(\left\|P+ Q\right\|_A, \left\|P- Q\right\|_A\Big)
\leq\omega_\mathbb{A}\left[\begin{pmatrix}
    P& Q\\
    0& 0
    \end{pmatrix}\right]\leq \min\Big(\left\|P+ Q\right\|_A, \left\|P- Q\right\|_A\Big).
\end{equation*}
In particular, if $Q=0$ we get
\begin{equation*}
\tfrac{1}{2} \|P\|_A\leq\omega_A(P) \leq \|P\|_A.
\end{equation*}
\end{corollary}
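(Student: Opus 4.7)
The plan is to observe that the hypothesis $APQ^{\sharp_A}=0$ forces $\omega_A(PQ^{\sharp_A})=0$, after which the claimed two-sided estimate falls out directly by substituting into the bounds already proved, namely the lower bound of Theorem \ref{lastthm} and the upper bound \eqref{a5iran}.

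First I would verify the key reduction. By definition,
$$\omega_A(T)=\sup\{|\langle Tx,x\rangle_A|:\|x\|_A=1\}=\sup\{|\langle ATx,x\rangle|:\|x\|_A=1\},$$
so $AT=0$ trivially forces $\omega_A(T)=0$. Applying this to $T=PQ^{\sharp_A}$ gives $\omega_A(PQ^{\sharp_A})=0$ under the hypothesis.

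Next I would feed this into the two bounds. From Theorem \ref{lastthm}, the lower bound collapses to
$$\omega_\mathbb{A}\!\left[\begin{pmatrix}P&Q\\0&0\end{pmatrix}\right]\geq\tfrac{1}{2}\sqrt{\max\{\|P+Q\|_A^2,\|P-Q\|_A^2\}}=\tfrac{1}{2}\max\{\|P+Q\|_A,\|P-Q\|_A\},$$
while \eqref{a5iran} collapses to
$$\omega_\mathbb{A}\!\left[\begin{pmatrix}P&Q\\0&0\end{pmatrix}\right]\leq\sqrt{\min\{\|P+Q\|_A^2,\|P-Q\|_A^2\}}=\min\{\|P+Q\|_A,\|P-Q\|_A\}.$$
This already gives the main inequality of the corollary.

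For the specialization $Q=0$ I would note that the hypothesis is automatic, $\|P\pm Q\|_A=\|P\|_A$, and Lemma \ref{lem01}(a) yields $\omega_\mathbb{A}\!\left[\begin{pmatrix}P&0\\0&0\end{pmatrix}\right]=\max\{\omega_A(P),\omega_A(0)\}=\omega_A(P)$, so the displayed inequality reduces to the known estimate \eqref{refine1}. There is no genuine obstacle here; the entire content is the observation that $APQ^{\sharp_A}=0\Rightarrow\omega_A(PQ^{\sharp_A})=0$, together with bookkeeping of the earlier bounds.
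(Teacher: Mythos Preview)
Your proposal is correct and follows exactly the approach indicated in the paper, which simply states the corollary is an immediate consequence of Theorem \ref{lastthm} and \eqref{a5iran}. Your observation that $APQ^{\sharp_A}=0$ forces $\omega_A(PQ^{\sharp_A})=0$ (since $\langle PQ^{\sharp_A}x\mid x\rangle_A=\langle APQ^{\sharp_A}x\mid x\rangle=0$), and your use of Lemma \ref{lem01}(a) for the specialization $Q=0$, supply precisely the details the paper omits.
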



\end{document}
